\newtheorem{theorem}{Theorem}[section]
\newtheorem{corollary}[theorem]{Corollary}
\newtheorem{lemma}[theorem]{Lemma}
\newtheorem{proposition}[theorem]{Proposition}
\newtheorem{conjecture}{Conjecture}
\theoremstyle{definition}
\newtheorem{remark}[theorem]{Remark}
\numberwithin{equation}{section}
\newcommand{\N}{\mathbb{N}}
\newcommand{\Z}{\mathbb{Z}}
\newcommand{\R}{\mathbb{R}}
\renewcommand{\Re}{\operatorname{Re}}
\renewcommand{\Im}{\operatorname{Im}}
\newcommand{\I}{\mathrm{i}}
\newcommand{\e}{\mathrm{e}}
\newcommand{\eps}{\varepsilon}
\newcommand{\vphi}{\varphi}
\newcommand{\MP}{\mathcal{P}}
\newcommand{\MN}{\mathcal{N}}
\newcommand{\LH}{\mathrm{LH}}
\DeclareMathOperator{\Li}{Li}
\DeclareMathOperator{\supp}{supp}
\begin{document}

\title{On the Lindel\"of hypothesis for general sequences}

\author[F. Broucke]{Frederik Broucke}
\thanks{F. Broucke was supported by a postdoctoral fellowship (grant number 12ZZH23N) of the Research Foundation -- Flanders}
\address{Department of Mathematics: Analysis, Logic and Discrete Mathematics\\ Ghent University\\ Krijgslaan 281\\ 9000 Gent\\ Belgium}
\email{fabrouck.broucke@ugent.be}

\author[S. Weish\"aupl]{Sebastian Weish\"aupl}
\thanks{S. Weish\"aupl was affiliated with the University of W\"urzburg during part of the research.}
\email{weishaeupl.sebastian@gmail.com}

\subjclass[2020]{Primary 11M26; Secondary 11L03, 11N80}
\keywords{Lindel\"of hypothesis, Lindel\"of hypothesis for general sequences, Beurling generalized number systems}

\begin{abstract} 
In a recent paper, Gonek, Graham, and Lee introduced a notion of the Lindel\"of hypothesis (LH) for general sequences which coincides with the usual Lindel\"of hypothesis for the Riemann zeta function in the case of the sequence of positive integers. They made two conjectures: that LH should hold for every admissible sequence of positive integers, and that LH should hold for the ``generic'' admissible sequence of positive real numbers. In this paper, we give counterexamples to the first conjecture, and show that the second conjecture can be either true or false, depending on the meaning of ``generic'': we construct probabilistic processes producing sequences satisfying LH with probability 1, and we construct Baire topological spaces of sequences for which the subspace of sequences satisfying LH is meagre. We also extend the main result of Gonek, Graham, and Lee, stating that the Riemann hypothesis is equivalent to LH for the sequence of prime numbers, to the context of Beurling generalized number systems.
\end{abstract}

\maketitle

\section{Introduction}
The \emph{Lindel\"of hypothesis} is a conjecture about the growth of the Riemann zeta function $\zeta(s)$. It says that for each $\eps>0$,
\[
	\zeta(1/2 + \I \tau) \ll_{\eps} \abs{\tau}^{\eps}, \quad \text{ as } \abs{\tau}\to\infty.
\]
Using Perron inversion and, conversely, the approximation of $\zeta(s)$ by partial sums of $\sum_{n} n^{-s}$, one can show (see e.g.\ the appendix of \cite{GGL} or the Appendix below) that the Lindel\"of hypothesis is equivalent to the following statement: for every $\eps>0$ and $B>0$,
\[
	\sum_{n\le x}n^{-\I t} = \frac{x^{1-\I t}}{1-\I t} + O_{\eps,B}(x^{1/2}\abs{t}^{\eps}),
\]
for $1\le x\le \abs{t}^{B}$.
Based on this formulation, Gonek, Graham, and Lee introduced in the article \cite{GGL} a notion of the Lindel\"of hypothesis for general ``admissible'' sequences $\MN$. They call an increasing sequence of positive reals $\MN = (n_{j})_{j\ge1}$ with counting function $N(x) = \sum_{n_{j}\le x}1$ \emph{admissible} if $N(x)$ can be written as $N(x) = M(x) + E(x)$, where
\begin{enumerate}
	\item $M(x)$ is differentiable, 
	\item $x^{\alpha_{1}} \ll M(x) \ll x^{\alpha_{2}}$ for some $0 < \alpha_{1} < \alpha_{2}$, and
	\item $E(x) \ll_{A} M(x)/(\log x)^{A}$ for every positive constant $A$.
\end{enumerate}
The Lindel\"of hypothesis is true for an admissible sequence $\MN$ with associated function $M(x)$, or $\LH(\MN, M(x))$ for short, if for every $\eps >0$ and $B>0$,
\[
	\sum_{n_{j}\le x}n_{j}^{-\I t} = \int_{n_{1}}^{x}u^{-\I t}M'(u)\dif u + O_{\eps, B}\bigl(M(x)^{1/2}\abs{t}^{\eps}\bigr),
\]
for $n_{1} \le x \le \abs{t}^{B}$. With this notation, the classical Lindel\"of hypothesis is equivalent to the statement $\LH(\N_{>0}, x)$.

Note that we employ the notation of Banks \cite{Banks} and explicitly mention the function $M(x)$, as the validity of $\LH$ also depends on this function. (For example, the statements $\LH(\N_{>0}, x)$ and $\LH(\N_{>0}, x+x^{3/4})$ are mutually exclusive.) 
 
 \medskip
 
The main result of \cite{GGL} is
\begin{theorem}[Gonek, Graham, Lee]
\label{GGL theorem}
Let $\mathbb{P}$ denote the sequence of prime numbers. Then $\LH(\mathbb{P}, \Li(x))$ is equivalent to the Riemann hypothesis.
\end{theorem}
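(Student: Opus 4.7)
The theorem is an equivalence, so both directions must be established. My plan is to treat each in turn, with the converse being the more delicate.

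\textbf{Direction 1: $\RH \Rightarrow \LH(\mathbb{P}, \Li(x))$.} Two applications of partial summation, together with the $O(\sqrt{x})$ contribution from prime powers in $\sum_{n\le x}\Lambda(n)n^{-\I t} - \sum_{p\le x}\log(p)p^{-\I t}$, reduce the task to proving that, under RH,
\[
\sum_{n\le x}\Lambda(n)n^{-\I t} = \frac{x^{1-\I t}}{1-\I t} + O_{\eps, B}\bigl(\sqrt{x}\abs{t}^{\eps}\bigr),
\]
uniformly for $1 \le x \le \abs{t}^{B}$. For this I would apply the truncated Perron formula to $-\zeta'(s + \I t)/\zeta(s + \I t)$ at the abscissa $c = 1 + 1/\log x$, then shift the contour to $\Re s = 1/2 + \delta$. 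Under RH, $\zeta$ has no zeros in that half-plane, so the only residue crossed is at $s = 1 - \I t$, producing the desired main term. The new contour is estimated using the RH-consequence $\abs{\zeta'(s)/\zeta(s)}\ll_{\delta}\abs{\Im s}^{\eps}$ for $\Re s \ge 1/2 + \delta$.

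\textbf{Direction 2: $\LH(\mathbb{P}, \Li(x)) \Rightarrow \RH$.} The strategy is to use $\LH$ to analytically continue the prime zeta function $P(s) = \sum_{p}p^{-s}$ from $\Re s > 1$ to $\Re s > 1/2$, apart from a logarithmic singularity at $s = 1$. Combined with the identity $\log\zeta(s) = P(s) + \sum_{k\ge 2}P(ks)/k$ (whose tail is analytic for $\Re s > 1/2$), this forces $\zeta(s)\ne 0$ throughout $\Re s > 1/2$, which is RH. Starting from the partial-summation representation, valid for $\sigma = \Re s > 1$,
\[
P(\sigma + \I t) = \sigma\int_{2}^{\infty}\Bigl(\sum_{p\le u}p^{-\I t}\Bigr)u^{-\sigma-1}\dif u + O(1),
\]
I would substitute the $\LH$-decomposition $\sum_{p\le u}p^{-\I t} = \int_{2}^{u}v^{-\I t}(\log v)^{-1}\dif v + E(u, t)$. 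After switching the order of integration, the main part becomes $\int_{2}^{\infty}v^{-s}(\log v)^{-1}\dif v$, a variant of the exponential integral $E_{1}((s-1)\log 2)$ which continues analytically to $\C\setminus(-\infty, 1]$. The error contribution, controlled by $\abs{E(u, t)}\ll\sqrt{\Li(u)}\abs{t}^{\eps}$ on $u\le\abs{t}^{B}$, would likewise define an analytic function on $\Re s > 1/2$.

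\textbf{Principal obstacle.} The hard part is Direction 2: $\LH$ controls $E(u, t)$ only for $u \le \abs{t}^{B}$, whereas the Mellin integral for $P$ runs out to infinity. For $\sigma > 1$ the trivial bound $E(u, t) = O(u/\log u)$ handles the tail, but for $\sigma \in (1/2, 1]$ it does not. The resolution will be to truncate at $u = \abs{t}^{B}$, apply $\LH$ on the bounded range (where the integrand is absolutely integrable when $\sigma > 1/2$), and treat the tail by exploiting the $\abs{t}^{-1}$-decay of $M(u, t) = \int_{2}^{u}v^{-\I t}(\log v)^{-1}\dif v$ via integration by parts. Since $B$ may be taken arbitrarily large in $\LH$, the tail contribution becomes negligible in the limit, yielding an analytic extension to every half-plane $\Re s > 1/2 + \eta$. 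Justifying this limit --- in particular the uniformity of convergence on compact subsets of $\{\Re s > 1/2\}$ --- will be the most technically demanding step.
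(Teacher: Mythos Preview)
Your Direction~1 is correct and matches the paper: Perron inversion for $-\zeta'(s+\I t)/\zeta(s+\I t)$, contour shift to $\Re s=1/2+\delta$, residue at $s=1-\I t$, and RH-bounds on $\zeta'/\zeta$ along the new contour.

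Direction~2 has a genuine gap that your proposed resolution does not close. The $\LH$ hypothesis bounds $E(u,t)$ only for $u\le|t|^{B}$, with implied constant depending on $B$; it gives no control of $E(u,t)$ as $u\to\infty$ for fixed $t$. Hence the tail $\sigma\int_{|t|^{B}}^{\infty}E(u,t)\,u^{-\sigma-1}\dif u$ cannot be estimated for any $\sigma<1$: the only available bound there is the trivial $E(u,t)\ll u/\log u$, under which the integral diverges, and the $|t|^{-1}$-decay of $M(u,t)$ is irrelevant since $M$ has already been separated off (and $\int_{|t|^{B}}^{\infty}u^{-\sigma}(\log u)^{-1}\dif u$ still diverges). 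Letting $B\to\infty$ does not help either: the truncation point depends on $\Im s$, so the truncated expression is not holomorphic in $s$, and the $\LH$-constants may blow up with $B$. If instead you work with the untwisted representation $P(s)=s\int_{2}^{\infty}\pi(u)\,u^{-s-1}\dif u$, then extending the error integral to $\Re s>1/2$ requires $\pi(u)-\Li(u)\ll u^{1/2+\eps}$, which is RH itself---the argument becomes circular.

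The paper (following Gonek--Graham--Lee, via a device of Pintz) avoids analytic continuation of $P$ altogether. Assuming $\zeta$ has a zero $\rho_{0}$ with $\beta_{0}=\Re\rho_{0}>1/2$ of multiplicity $m$, it introduces
\[
h_{t}(s)=\frac{\zeta(s+\I t)(s+\I t-1)}{(s+\I t-\rho_{0})^{m}(s+\I t)^{3}},\qquad
I_{t}(x)=\int_{1}^{x}R\Bigl(\frac{x}{y},t\Bigr)W_{t}'(y)\dif y,
\]
where $W_{t}$ is the inverse Mellin transform of $h_{t}(s)/s$ and $R(x,t)=\psi(x,t)-x^{1-\I t}/(1-\I t)$. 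Because the integral runs only over $[1,x]$, the $\LH$ bound applies throughout and gives $I_{t}(x)\ll_{\eps,B,\rho_{0}} x^{1/2}(\log x)^{2}|t|^{\eps}$. On the other hand, the factor $\zeta(s+\I t)$ in $h_{t}$ cancels \emph{every} pole of $-\zeta'/\zeta$ except the one at $\rho_{0}$, so a contour shift yields $I_{t}(x)=c_{\rho_{0}}\,x^{\rho_{0}-\I t}/(\rho_{0}-\I t)+O(x^{1/2}(\log x)^{2})$ with $c_{\rho_{0}}\neq0$. Taking $|t|=x^{1/B}$ with $B>(1+\eps)/(\beta_{0}-1/2)$ gives the contradiction. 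The idea your plan is missing is precisely this weight that isolates a single zero while keeping the average over a \emph{finite} range where $\LH$ is applicable.
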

Here we used $\Li(x)$ to denote the logarithmic integral, which we define as $\Li(x) = \int_{1}^{x}\frac{1-u^{-1}}{\log u}\dif u$.
Interestingly, RH is also equivalent to LH for certain thin sets of primes. For example, in \cite{Banks} Banks showed that there are arbitrarily small values of $\eps>0$ and subsequences $\MP_{\eps}$ of the primes with counting function $\sim \eps\Li(x)$ such that the Riemann hypothesis is equivalent to $\LH(\MP_{\eps}, \eps\Li(x))$.
 
In \cite{GGL}, the following two conjectures are made.
\begin{conjecture}[Gonek, Graham, Lee]
\label{conjecture 1}
$\LH(\MN)$ is true for every admissible sequence of positive integers $\MN$.
\end{conjecture}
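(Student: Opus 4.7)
The natural first attempt is a direct comparison via partial summation. Writing $N(u) = M(u) + E(u)$, Abel summation gives
\[
\sum_{n_j \le x} n_j^{-\I t} - \int_{n_1}^{x} u^{-\I t} M'(u) \dif u = x^{-\I t} E(x) + \I t \int_{n_1}^{x} u^{-\I t - 1} E(u) \dif u + O(1).
\]
Admissibility yields $E(u) \ll_A M(u)/(\log u)^A$ for every $A > 0$, and one would hope that plugging this in, together with oscillation from $u^{-\I t}$ in the integral, produces the target bound $O\bigl(M(x)^{1/2} \abs{t}^{\eps}\bigr)$.

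A quick size check reveals that this cannot succeed using admissibility alone. In the extreme regime $x = \abs{t}^{B}$, the boundary term $x^{-\I t} E(x)$ has potential size $M(x)/(\log x)^A$, whereas the target is only $M(x)^{1/2} \abs{t}^{\eps} = M(x)^{1/2} x^{\eps/B}$. Using $M(x) \gg x^{\alpha_{1}}$, the first quantity exceeds the second by a positive power of $x$ as soon as $B > 2\eps/\alpha_{1}$. Hence any proof of Conjecture \ref{conjecture 1} would need to exploit cancellation specific to sequences of integers, not merely the slow growth of $E$.

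This obstruction doubles as a blueprint for a counterexample, so my plan is to construct one rather than attempt a proof. I would aim for an admissible integer sequence $\MN$ whose error, on a dyadic range $[X, 2X]$, has an oscillatory component essentially of the form $M(u)(\log u)^{-A}\cos(t_{0} \log u + \phi)$, with $t_{0} \asymp X^{1/B}$ tuned so that $x = t_{0}^{B}$ lies in the bad regime identified above. The resonance between $u^{-\I t_{0}}$ and the cosine would prevent cancellation in the integral term and produce a contribution of order $M(X)/(\log X)^{A}$, which for suitable $B$ exceeds $M(X)^{1/2} \abs{t_{0}}^{\eps}$ and thereby violates $\LH(\MN, M)$.

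The main obstacle I foresee is the \emph{integer realization} step: one cannot prescribe $E(u)$ freely, because $N(u)$ must be a nondecreasing integer-valued step function with unit jumps at positive integers. I would start from a smooth target counting function $M(u) + \widetilde{E}(u)$ of the desired shape and define $\MN$ via a discretization of its jumps, for instance by placing an integer at each unit jump of $\lfloor M(u) + \widetilde{E}(u)\rfloor$. Verifying that rounding does not swamp the resonant oscillation in $\widetilde{E}$ — using that rounding contributes only $O(1)$ discrepancy per unit interval, negligible against the dominant resonance contribution — is the delicate technical step; if it can be pushed through, the resulting $\MN$ refutes the conjecture.
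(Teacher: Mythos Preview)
You correctly recognise that the conjecture is false and that the task is to build an integer counterexample; your partial-summation heuristic is a sound explanation of why admissibility alone cannot force $\LH$.

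There are, however, two genuine gaps in the construction as written. First, the amplitude $M(u)(\log u)^{-A}$ is too large: differentiating $\widetilde{E}(u)=M(u)(\log u)^{-A}\cos(t_{0}\log u+\phi)$ produces a term of order $t_{0}M(u)/\bigl(u(\log u)^{A}\bigr)$, and with $t_{0}=X^{1/B}\to\infty$ this eventually dominates $M'(u)$, so $\tilde N=M+\widetilde{E}$ is not monotone and cannot be the counting function of any sequence. One must shrink the amplitude to $c(u)\ll u/t_{0}$; the resonance then has size $\asymp c(X)t_{0}$, which can still be arranged to exceed $M(X)^{1/2}\abs{t_{0}}^{\eps}$, so the idea is salvageable but not as stated. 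Second, your discretization ``place an element at each unit jump of $\lfloor\tilde N(u)\rfloor$'' yields a sequence of \emph{real} numbers, not integers; a further rounding of the jump locations to $\Z$ is required, and controlling its effect on the exponential sum is an additional nontrivial step beyond the one you flag.

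The paper bypasses both issues with a far more direct device. It takes $n_{j}=3j+\delta_{j}$ with $\delta_{j}\in\{0,1,2\}$, so that $N(x)=x/3+O(1)$ and admissibility with $M(x)=x/3$ is immediate. For $t=2\pi K$ and $\beta K<j\le K$ (with $\beta$ slightly below $1$), a short computation shows that the three candidate values $(3j+\delta)^{-\I t}$, $\delta\in\{0,1,2\}$, differ from $(3j)^{-\I t}$ by factors close to the three cube roots of unity; hence one of the choices always gives $\Re n_{j}^{-\I t}\ge c>0$. Picking $\delta_{j}$ accordingly on each such block forces $\abs{S_{K,t}}\gg K$ with $t\asymp K$, already violating $\LH(\MN,x/3)$ at $B=1$. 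Rather than encoding oscillation in the counting function and hoping it survives two layers of discretization, the paper exploits the per-term freedom in an already integer sequence to align phases directly. This is both shorter and yields the strongest possible error term $N(x)=x/3+O(1)$; your route, once repaired, might be useful for producing counterexamples with a prescribed smooth $M$, but it is substantially more technical.
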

\begin{conjecture}[Gonek, Graham, Lee]
\label{conjecture 2}
$\LH(\MN)$ is true for the ``generic'' admissible sequence of positive real numbers $\MN$.
\end{conjecture}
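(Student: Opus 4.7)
The plan is to show that Conjecture \ref{conjecture 2} is sensitive to the meaning one attaches to the word ``generic'', so that a resolution in either direction is possible. I will pursue two complementary constructions: a \emph{probabilistic} one, in which ``generic'' is interpreted in the measure-theoretic sense (almost sure), and a \emph{topological} one, in which ``generic'' is interpreted in the Baire sense (comeagre). In both, one first fixes a smooth admissible target $M(x)$ with $M'(x) > 0$ and then varies $\MN$ around it.

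\textbf{Probabilistic direction (LH almost surely).} Let $\MN$ be the random point configuration on $[1,\infty)$ produced by a Poisson point process of intensity $M'(x)\dif x$. Standard moment formulas for Poisson integrals give, for fixed $x,t$,
\[
    \mathbb{E}\biggl[\sum_{n_{j}\le x} n_{j}^{-\I t}\biggr] = \int_{1}^{x} u^{-\I t}M'(u)\dif u, \qquad \mathrm{Var}\biggl[\sum_{n_{j}\le x} n_{j}^{-\I t}\biggr] = M(x),
\]
the variance following from $\abs{u^{-\I t}}^{2} = 1$. A Gaussian-type concentration inequality then yields a pointwise deviation of size $O\bigl(M(x)^{1/2}L\bigr)$ with probability at least $1 - \e^{-cL^{2}}$. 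To upgrade this to the uniform LH bound $O_{\eps,B}(M(x)^{1/2}\abs{t}^{\eps})$ on $1\le x\le \abs{t}^{B}$, I would discretize $(x,t)$ on a polynomial grid, apply the union bound with $L$ logarithmic in the grid size, invoke Borel--Cantelli, and control the gaps by the local Lipschitz behaviour of both sides in $x$ and $t$. Admissibility of the realized $\MN$ itself follows from the same Poisson concentration applied only to the counting function.

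\textbf{Topological direction (LH meagre).} Endow the class of admissible sequences with fixed target $M(x)$ with a Polish (hence Baire) topology reflecting uniform closeness of counting functions on compact windows, modulo controlled perturbations preserving admissibility. For fixed $\eps, B, C > 0$, consider
\[
    \mathfrak{X}_{\eps,B,C} = \biggl\{\MN : \biggl|\sum_{n_{j} \le x} n_{j}^{-\I t} - \int_{n_{1}}^{x} u^{-\I t}M'(u)\dif u\biggr| \le C M(x)^{1/2}\abs{t}^{\eps}, \; 1\le x \le \abs{t}^{B}\biggr\}.
\]
The task is to show each $\mathfrak{X}_{\eps,B,C}$ is nowhere dense. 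Given any $\MN$ in any prescribed neighbourhood, the strategy is to perturb $\MN$ inside a short window $[y,y+h]$ by shifting a small but increasing number of points so that at a carefully chosen large $t$ the phases $n_{j}^{-\I t}$ align constructively, creating a resonance exceeding $C M(x)^{1/2}\abs{t}^{\eps}$. Intersecting over a countable dense sequence of triples $(\eps,B,C)$ then renders the LH-violating set a dense $G_{\delta}$, hence comeagre.

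\textbf{Main obstacles.} On the probabilistic side, the principal technical difficulty is uniformity in $t$ up to polynomial height $\abs{t}^{B}$: the variance estimate is pointwise, and the $\eps$-saving in $\abs{t}^{\eps}$ must absorb the entropy cost of the union bound over the $(x,t)$-grid. On the topological side, the delicate point is engineering perturbations which are simultaneously small in the chosen metric, compatible with the admissibility requirement $E(x) \ll_{A} M(x)/(\log x)^{A}$, and sizeable enough to force a resonance violating LH; choosing the ambient topology so that all three demands can be satisfied at once is the critical design decision, and will in particular dictate how freely one may perturb on the window $[y, y+h]$.
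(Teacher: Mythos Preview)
Your two-pronged strategy---probabilistic genericity yielding LH almost surely, topological genericity yielding LH only on a meagre set---is exactly the paper's resolution. On the probabilistic side you propose a Poisson process of intensity $M'$, whereas the paper samples exactly one point $n_{j}$ from each interval $(x_{j-1},x_{j}]$ (with $M(x_{j})=j$) according to $\dif M$. That model makes $\abs{N(x)-M(x)}\le 1$ deterministically, so admissibility is free, and Hoeffding's inequality applies directly to the bounded independent summands $N_{j}^{-\I t}$. The paper also grids only over integer $t$ and over the natural values $x=x_{J}$, then passes to arbitrary $(x,t)$ by a single integration by parts rather than Lipschitz interpolation over a fine mesh. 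Your Poisson variant is plausible but requires a Poisson-adapted concentration bound and a separate almost-sure admissibility argument, both of which the paper's model sidesteps.

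On the topological side the paper resolves what you call the ``critical design decision'' concretely, and differently from your sketch. It builds admissibility into the space: $X=\prod_{j\ge1}[a_{j},b_{j}]$ with $a_{j}\sim b_{j}\sim j$ and $b_{j}-a_{j}\gg 1$, under either the product topology (compact Hausdorff) or the normalized sup-metric $d(\MN_{1},\MN_{2})=\sup_{j}\abs{n_{1,j}-n_{2,j}}/(b_{j}-a_{j})$ (complete). The perturbation is then \emph{global}, not confined to a short window: for $t\asymp x^{1+1/k}$ one shifts \emph{every} $n_{j}$ with $n_{j}\le x$ by less than $\eps(b_{j}-a_{j})$ so that all phases $(n_{j}')^{-\I t}$ point in one prescribed direction, producing a sum of order $x$. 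This violates any bound of the form $\eta x$ with $\eta<1$, far stronger than merely breaking the LH bound. Your window perturbation moves only $O(h)$ points, yielding a resonance of that order; to exceed $M(x)^{1/2}\abs{t}^{\eps}$ you are forced into a delicate trade-off between metric smallness of the perturbation and size of the resonance, and it is not clear this can be made to work in a topology that still enforces admissibility. The global-alignment trick eliminates this tension and is the main idea your topological sketch is missing.
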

The aim of this paper is twofold. First, we resolve the above conjectures. Note that the conjectures are not completely unambiguous, as no function $M(x)$ is mentioned in their statements, and a precise meaning of ``generic'' is not specified. However, we feel that the results we obtained provide compelling answers to what the authors of \cite{GGL} likely had in mind when formulating their conjectures.

Concretely, we provide the following counterexample to Conjecture \ref{conjecture 1}.
\begin{theorem}
\label{counterexample conjecture 1}
There exist integer sequences $\MN = (n_{j})_{j\ge1}$ given by $n_{j} = 3j + \delta_{j}$, where $\delta_{j}=0,1$ or $2$, with counting function $N(x) = x/3 + O(1)$, and for which $\LH(\MN, x/3)$ is false.
\end{theorem}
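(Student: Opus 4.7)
The plan is to build $(\delta_j)\subset\{0,1,2\}$ so that the exponential sum $\sum_{n_j\le x_k}n_j^{-\I t_k}$ resonates at an infinite sequence of test points $(x_k,t_k)$ with $x_k\asymp t_k$, attaining magnitude $\gg t_k$---far exceeding the $\LH$ prediction $\ll x_k^{1/2}t_k^{\eps}\asymp t_k^{1/2+\eps}$.

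The starting point is the phase identity
\[
n_j^{-\I t}=(3j)^{-\I t}\e^{-\I\delta_j\phi_j(t)}\bigl(1+O(1/t)\bigr),\qquad\phi_j(t):=t/(3j),
\]
valid for $j\asymp t$ (obtained by expanding $\log(1+\delta_j/(3j))$ since $|\delta_j/(3j)|\asymp 1/t$). When $\phi_j(t)$ is close to $2\pi/3$---i.e., for $j$ in the window $J(t):=\{j\in\N:\abs{\phi_j(t)-2\pi/3}<\eta\}$, an interval of length $\asymp\eta t$ about $j=t/(2\pi)$---the three factors $\{\e^{-\I\delta_j\phi_j(t)}:\delta_j\in\{0,1,2\}\}$ lie near the three cube roots of unity. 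Hence for any target angle $\alpha$ one can choose $\delta_j$ so that $\Re(\e^{-\I\alpha}n_j^{-\I t})\ge 1/2-O(\eta)$, producing an aligned partial sum $\sum_{j\in J(t)}n_j^{-\I t}$ of size $\ge c\eta t$ for some $c>0$.

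Fix $\eta>0$ small and a rapidly increasing sequence $t_1<t_2<\dotsb$ with $t_{k+1}\ge t_k^2$; the windows $J_k:=J(t_k)$ are then pairwise disjoint. For $j\in J_k$ choose $\delta_j$ by the alignment rule for $(t_k,\alpha_k)$ (any fixed target angle $\alpha_k$); for $j\notin\bigcup_k J_k$ set $\delta_j=0$. The resulting integer sequence $n_j=3j+\delta_j$ has $N(x)=x/3+O(1)$. Set $x_k:=3\max J_k+2\asymp t_k$ (so $x_k\le t_k$ is admissible with $B=1$), and decompose
\[
\sum_{n_j\le x_k}n_j^{-\I t_k}=\sum_{j\in J_k}n_j^{-\I t_k}+\sum_{l<k}\sum_{j\in J_l}n_j^{-\I t_k}+\sum_{\substack{j\le\max J_k\\j\notin\bigcup_l J_l}}(3j)^{-\I t_k}.
\]
The first piece is $\ge c\eta t_k$ by construction. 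The second is $\le\sum_{l<k}\abs{J_l}=O(\eta t_{k-1})=O(\eta\sqrt{t_k})$ trivially. The third, after adding and subtracting the $J_l$ contributions, reduces to $\sum_{j\le\max J_k}(3j)^{-\I t_k}=O(\sqrt{t_k}\log t_k)$ by van der Corput's second-derivative bound (or the approximate functional equation for $\zeta(\I t)$) plus trivial bounds on the subtracted terms. Since the $\LH$ main term $\int_{n_1}^{x_k}u^{-\I t_k}/3\,\dif u=O(x_k/t_k)=O(1)$, the error in $\LH(\MN,x/3)$ at $(x_k,t_k)$ is $\gg\eta t_k$, contradicting $\ll t_k^{1/2+\eps}$ for any $\eps<1/2$. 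Choosing $\eps_0=1/4$ and $B_0=1$ thus refutes the hypothesis.

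The main obstacle is justifying the off-resonance bounds cleanly: one must verify that at the test frequency $t_k$, the contributions from the earlier windows $J_l$ ($l<k$, where the $\delta_j$'s were tailored to a different frequency $t_l$ and thus might in principle reinforce at $t_k$) and from the unstructured residual with $\delta_j=0$ are genuinely dominated by $\eta t_k$. Both rely on standard van der Corput / Fresnel-type estimates exploiting the non-degenerate curvature $|\partial_j^2(-t\log(3j))|=t/j^2\asymp 1/t$ on $J_k$, combined with the rapid growth $t_{k+1}\ge t_k^2$ that makes cross-window terms small.
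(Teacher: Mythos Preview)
Your construction is correct in spirit and does refute $\LH(\MN,x/3)$, but it takes a heavier route than the paper's. The paper's argument is purely inductive and needs no exponential-sum estimates whatsoever: at stage $K$ they set $t=2\pi K$, observe that for $\beta K<j\le K$ (with $\beta$ just below $1$) the three choices of $\delta_j$ place $n_j^{-\I t}$ near three equispaced points on the unit circle, and then---before fixing these $\delta_j$---they inspect the \emph{sign} of $\Re S_{\beta K,t}$ (already determined by earlier choices) and select each new $\delta_j$ so that $\Re n_j^{-\I t}$ has that same sign and magnitude exceeding some $c>0$. This yields $|\Re S_{K,t}|\ge c(1-\beta)K-c$ with no estimate on $S_{\beta K,t}$ at all; the off-window contribution is absorbed for free. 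Your scheme instead fixes disjoint windows in advance, sets $\delta_j=0$ outside them, and must then \emph{estimate} every off-window piece at frequency $t_k$ via van der Corput---which is also what forces the rapid-growth hypothesis $t_{k+1}\ge t_k^2$. Both approaches succeed, but the paper's sign trick is considerably more elementary.

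One point in your write-up needs repair. In treating the third piece you say it ``reduces to $\sum_{j\le\max J_k}(3j)^{-\I t_k}$ \ldots\ plus trivial bounds on the subtracted terms'', but among those subtracted terms is $\sum_{j\in J_k}(3j)^{-\I t_k}$, whose \emph{trivial} bound is $|J_k|\asymp\eta t_k$---the same order as your resonant contribution---so the lower bound on the full sum would collapse as written. The fix is immediate: either observe that the third sum in your decomposition actually ranges only over $j<\min J_k$ (so only the windows $J_l$ with $l<k$ are subtracted, and those are trivially $O(\eta t_{k-1})=O(\eta\sqrt{t_k})$), or else apply van der Corput to the $J_k$ block as well, which gives $O(\sqrt{t_k})$ since $|f''|\asymp 1/t_k$ there.
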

Actually, one can even obtain integer sequences $\MN$ of density $1$ failing $\LH(\MN,x)$, see Theorem \ref{th: counterexample density 1} below.

Regarding Conjecture \ref{conjecture 2}, we adopt two viewpoints commonly used when addressing questions concerning ``generic'' or ``typical'' behavior: a probabilistic (or measure-theoretical) viewpoint and a topological viewpoint.

For suitable functions $M(x)$, we construct natural probabilistic processes (natural measure spaces) which produce sequences $\MN$ satisfying $\LH(\MN, M(x))$ with probability $1$ (for which the subspace of sequences $\MN$ satisfying $\LH(\MN,M(x))$ has full measure); see Theorems \ref{probabilistic theorem} and \ref{probabilistic theorem squareroot}. On the other hand, we also construct natural topological spaces of sequences for which the subspace of sequences $\MN$ satisfying $\LH(\MN, M(x))$ is actually negligible in the sense of Baire category; see Theorem \ref{categoric theorem}. Both viewpoints can also be combined and applied to a single locally compact Hausdorff space equipped with a (Radon) probability measure.
Hence we demonstrate that Conjecture \ref{conjecture 2} can have different answers depending on the specific meaning of the word ``generic''.

\medskip
The second aim is to extend Theorem \ref{GGL theorem} to Beurling generalized number systems. Here we just state the main theorem, and refer to the beginning of Section \ref{sec: Beurling} for the necessary background and notation regarding such number systems.

Given a Beurling number system $(\MP,\MN)$, we denote its integer-counting function by $N_{\MP}(x)$, its prime-counting function by $\pi_{\MP}(x)$ and its Chebyshev function by $\psi_{\MP}(x)$. We say that the Riemann hypothesis holds for the system $(\MP, \MN)$ if
\[
	\psi_{\MP}(x) = x + O_{\eps}(x^{1/2+\eps}), \quad \text{for every } \eps>0.
\]
\begin{theorem}
\label{th: LH(P) <=> RH}
Let $(\MP, \MN)$ be a system of Beurling generalized primes and integers, for which $N_{\MP}(x) = Ax + O(x^{1/2})$ for some $A>0$. Then $\LH(\MP, \Li(x))$ is equivalent to the Riemann hypothesis for $(\MP,\MN)$. 
\end{theorem}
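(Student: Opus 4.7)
My plan is to follow the strategy of Gonek--Graham--Lee for $\zeta$, replacing every use of the functional equation by the quantitative information about $\zeta_{\MP}$ provided by the hypothesis $N_{\MP}(x) = Ax + O(x^{1/2})$. As a preliminary step, the Mellin--Stieltjes representation
\[
\zeta_{\MP}(s) = \frac{As}{s-1} + s\int_{1}^{\infty}(N_{\MP}(x)-Ax)x^{-s-1}\dif x
\]
extends $\zeta_{\MP}(s)-A/(s-1)$ analytically to $\sigma > 1/2$ and delivers the polynomial bound $\zeta_{\MP}(\sigma+\I t) \ll_{\delta} 1+\abs{t}$ uniformly for $\sigma \ge 1/2 + \delta$, which will stand in for the classical convexity estimate. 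Together with the decomposition $\log\zeta_{\MP}(s) = F(s) + G(s)$, where $F(s) = \sum_{p\in\MP}p^{-s}$ and $G(s)$ is the absolutely convergent prime-power tail (analytic and bounded in $\sigma > 1/2$), this provides the only analytic input that I shall need.

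For the direction $\RH \Rightarrow \LH(\MP,\Li(x))$: the assumption $\psi_{\MP}(x) = x + O_{\eps}(x^{1/2+\eps})$, combined with the polynomial growth above, yields via a Perron inversion of $-\zeta_{\MP}'/\zeta_{\MP}$ with contour shifted to $\sigma = 1/2 + \eps$ the non-vanishing of $\zeta_{\MP}$ in $\sigma > 1/2$ together with $\log\zeta_{\MP}(\sigma+\I t) \ll_{\delta, \eps}\abs{t}^{\eps}$ for $\sigma \ge 1/2 + \delta$; the same bound then passes to $F$. Applying a truncated Perron formula to $F$ with line of integration $c = 1+1/\log x$ and truncation $T = \abs{t}^{B+1}$, and subsequently shifting the contour to $\Re s = \eps - 1/2$, the residue at $s = 0$ unfolds via the elementary identity $(1-u^{-1})/\log u = \int_{0}^{1}u^{-\alpha}\dif\alpha$ into the required main term $\int_{p_{1}}^{x}u^{-\I t}(1-u^{-1})/\log u\,\dif u$, while the shifted contour contributes $\ll \Li(x)^{1/2}\abs{t}^{\eps}$ by the bound on $F$.

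For the converse $\LH(\MP,\Li(x)) \Rightarrow \RH$: after subtracting the main term, $F(\sigma+\I t)$ admits a Mellin integral representation against the error
\[
E(u, t) := \sum_{p\le u}p^{-\I t} - \int_{p_{1}}^{u}v^{-\I t}(1-v^{-1})/\log v\,\dif v.
\]
Splitting the range at $u = \abs{t}^{B}$ with $B$ small (roughly $B = 2\eps$), the LH bound $E(u,t) \ll \Li(u)^{1/2}\abs{t}^{\eps}$ on the low range yields $O(\abs{t}^{\eps})$, while the Chebyshev-type estimate $\pi_{\MP}(u) \ll u/\log u$ (which follows from the hypothesis on $N_{\MP}$) handles the tail to give $O(\abs{t}^{B(1/2-\delta)}) = O(\abs{t}^{\eps})$. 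Thus $F(\sigma+\I t) \ll_{\delta,\eps}\abs{t}^{\eps}$, and hence $\log\zeta_{\MP}(\sigma+\I t) \ll_{\delta,\eps}\abs{t}^{\eps}$ on $\sigma \ge 1/2 + \delta$, which forces $\zeta_{\MP}$ to be zero-free in that half-plane. A further Perron inversion of $-\zeta_{\MP}'/\zeta_{\MP}$ with contour shifted to $\Re s = 1/2 + \eps$ then converts this into the Chebyshev estimate $\psi_{\MP}(x) = x + O_{\eps}(x^{1/2+\eps})$.

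The main obstacle is the absence of a functional equation: every contour shift must be justified solely by the one-sided polynomial bound of the preliminary step, which constrains the admissible truncation and contour choices. The most delicate technical point is the bookkeeping in the converse direction, where one must balance $B$ against $\delta$ and $\eps$ so that the LH contribution on $u \le \abs{t}^{B}$ and the Chebyshev tail on $u > \abs{t}^{B}$ are simultaneously $O(\abs{t}^{\eps})$, uniformly as $\sigma \to 1/2^{+}$; an analogous care is needed in the forward direction to match the residue $F(\I t)$ with the logarithmic-integral main term up to the admissible error $O(\Li(x)^{1/2}\abs{t}^{\eps})$.
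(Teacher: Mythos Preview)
Your forward direction (RH $\Rightarrow$ LH) is close in spirit to the paper's, which also proceeds by Perron inversion and a contour shift; the paper works with $-\zeta_{\MP}'/\zeta_{\MP}$ rather than $F(s)=\sum_{p}p^{-s}$, so the main term $x^{1-\I t}/(1-\I t)$ arises as the residue at $s=1-\I t$ and the contour is moved only to $\sigma = 1/2 + 1/\log x$. Your ``shift to $\Re s = \eps - 1/2$'' with a ``residue at $s=0$'' cannot be right as written: under RH, $F$ continues only to $\sigma>1/2$ and has a logarithmic singularity at $s=1$, not a pole, so there is nothing to pick up at $s=0$. This part can presumably be repaired along the paper's lines.

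The converse direction (LH $\Rightarrow$ RH), however, has a genuine gap. Your Mellin representation of $F(\sigma+\I t) - \log\tfrac{\sigma+\I t}{\sigma+\I t-1}$ as an integral against $E(u,t)$ is valid a priori only for $\sigma>1$; to push it into $1/2<\sigma\le 1$ you need the tail $\int_{\abs{t}^{B}}^{\infty} E(u,t)\,u^{-\sigma-1}\dif u$ to converge. The Chebyshev estimate gives only $E(u,t)\ll u/\log u$, so this integral \emph{diverges} for every $\sigma<1$ --- it cannot be $O(\abs{t}^{B(1/2-\delta)})$ as you claim. LH controls $E(u,t)$ only for $u\le \abs{t}^{B}$, with implied constant depending on $B$, so there is no uniform bound on the tail without already assuming $\pi_{\MP}(u)-\Li(u)\ll u^{1/2+\eps}$, i.e.\ the very RH you are trying to prove. (Contrast the appendix, where the analogous argument \emph{does} work for $\zeta_{\MP}$ itself because the hypothesis $N_{\MP}(x)=Ax+O(x^{1/2})$ supplies the approximation formula; there is no such input for $F$.)

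The paper treats this direction by an entirely different device of Pintz. Assuming a zero $\rho_{0}=\beta_{0}+\I\gamma_{0}$ with $\beta_{0}>1/2$, one introduces the weight
\[
h_{t}(s)=\frac{\zeta_{\MP}(s+\I t)(s+\I t-1)}{(s+\I t-\rho_{0})^{m}(s+\I t)^{3}},
\]
whose inverse Mellin transform $W_{t}$ is used to form $I_{t}(x)=\int_{1}^{x}R(x/y,t)\,\dif W_{t}(y)$. Bounding $I_{t}(x)$ once via LH and once via contour shifting (where only the residue at $s=\rho_{0}-\I t$ survives, since the factor $\zeta_{\MP}$ in the numerator cancels all other poles of $-\zeta_{\MP}'/\zeta_{\MP}$) gives $x^{\beta_{0}}/\abs{\rho_{0}-\I t}\ll x^{1/2}(\log x)^{2}\abs{t}^{\eps}$, a contradiction for $x=\abs{t}^{B}$ with $B$ large. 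It is precisely this weighting trick that sidesteps the divergence problem your approach runs into.
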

We note that the exponent $1/2$ in the error term for $N_{\MP}$ is sharp, see Proposition \ref{counterexample weakening square root error term} below.

\medskip

The paper is organized as follows. In Section \ref{sec: LH general sequences}, we answer the conjectures of Gonek, Graham, and Lee. Next, in Section \ref{sec: Beurling}, we prove Theorem \ref{th: LH(P) <=> RH}, comment on the necessity of the condition $N_{\MP}(x) = Ax+O(x^{1/2})$, and show the existence of very ``nice'' Beurling systems, in the sense that the Riemann hypothesis, $\LH(\MN, Ax)$, and $\LH(\MP, \Li(x))$ all hold. In Section \ref{sec: remarks definition} we make some remarks on the definition of $\LH(\MN, M(x))$. In particular, we show that under mild conditions on the sequence $\MN$, one can always find a smooth function $M(x)$ such that $\LH(\MN, M(x))$ holds (see Proposition \ref{prop: choice M}). Finally, in the appendix, we sketch the equivalence of $\LH(\MN, Ax)$ and ``$|\tau|^{\eps}$-bounds'' for the associated Dirichlet series of $\MN$, for a natural class of sequences $\MN$.

\medskip

Regarding notation, we will typically write complex numbers as $s=\sigma+\I \tau$ and reserve the letter $t$ for the parameter occurring in the definition of $\LH$. When considering a sequence $\MN$, we will always use $N(x)$ to denote its counting function. Sometimes we will also speak about $\LH(\MN,M(x))$ for sequences $\MN$ which are not necessarily non-decreasing.
\section{The Lindel\"of hypothesis for general sequences}
\label{sec: LH general sequences}

\subsection{Integer sequences failing LH}
\label{subsec: Integer sequences}
In \cite[p. 2866]{GGL} an example due to Montgomery is given of a sequence $\MN$ satisfying $N(x) = x+O(1)$ and for which $\LH(\MN, x)$ is false. The idea is to consider sequences $(t_{k})_{k \ge 1}$ and $(x_{k})_{k \ge 1}$ and ``cook up'' the elements $n_{j}$ of $\MN$ with $x_{k}/2<n_{j}\le x_{k}$ in such a way that all the $n_{j}^{-\I t_{k}}$ point in the same direction, making the exponential sum $\sum_{n_{j}\le x_{k}}n_{j}^{-\I t_{k}}$ large (in fact even $\gg x_{k}$). 

A similar method can be employed to construct \emph{integer} sequences for which $\LH$ fails.

\begin{proof}[Proof of Theorem \ref{counterexample conjecture 1}]
We inductively construct an integer sequence $\MN = (n_j)_{j\ge1}$, where $n_j = 3j + \delta_j$ with $\delta_j \in \{0, 1, 2\}$. Let $ 0 < \alpha < \pi/6$, $\beta = (1 + \alpha/4\pi)^{-1} $ and suppose that $n_j$ has been selected for $j \le \beta K$, where $K$ is a large integer. 
We will now choose $n_j$ for $ \beta K < j \le K $. Set 
\[
	S_{J, t} = \sum_{j \leq J} n_j^{-\I t}.
\]
Let $t = 2 \pi K$ and suppose first that $\Re S_{\beta K, t} \ge 0$. We have
\begin{equation}\label{eq: counterexample 1}
	n_j^{-\I t} = (3j + \delta_j)^{-\I t} = (3j)^{-\I t} \exp\Big\{-\I t \log\Big(1 + \frac{\delta_j}{3j}\Big)\Big\}.
\end{equation}
Here
\[
	t \log\Big(1 + \frac{\delta_j}{3j}\Big) = \frac{t \delta_j}{3j} + O\Big(\frac{t}{j^2}\Big) = \frac{t \delta_j}{3j} + O\Big(\frac{1}{K}\Big), \quad \text{if } j\ge \beta K.
\]
Hence for sufficiently large $K$ it follows that
\begin{equation}\label{eq: counterexample 2}
	\abs{t \log\Big(1 + \frac{\delta_j}{3j}\Big) - \frac{t\delta_j}{3j}} \leq \frac{\alpha}{2}, \quad \text{for } j\ge \beta K.
\end{equation}
If now $\beta K < j \le K$ we have 
\begin{equation}\label{eq: counterexample 3}
	 \frac{2\pi \delta_j}{3} \le \frac{t \delta_j}{3j} = \frac{2 \pi K \delta_j}{3j} \le \beta^{-1} \frac{2\pi \delta_j}{3} = \Big(1 + \frac{\alpha}{4\pi}\Big) \frac{2\pi \delta_j}{3} \le \frac{2\pi \delta_j}{3} + \frac{\alpha}{2}
\end{equation}
Combining (\ref{eq: counterexample 2}) and (\ref{eq: counterexample 3}) yields
\begin{equation}
\label{eq: counterexample 4}
	\abs{t \log\Big(1 + \frac{\delta_j}{3j}\Big) - \frac{2\pi \delta_j}{3}} \leq \alpha, \quad \text{for } \beta K < j \le K.
\end{equation}
Now it's geometrically easy to see that in view of \eqref{eq: counterexample 1} and \eqref{eq: counterexample 4} we can choose $\delta_j \in \{0, 1, 2\}$ for all $\beta K < j \leq K$ in such a way that $\Re n_j^{-\I t} > c \coloneqq \cos(\alpha + \pi/3) > 0$. Indeed, for three equidistant arcs of length $2\alpha$ on the unit circle, at least one arc lies completely within the region $\bigl\{(x,y) \in \R^{2}: x \ge \cos(\alpha+\pi/3)\bigr\}$.
It follows that
\[
	\Re S_{K, t} \ge (K - \beta K - 1) c + \Re S_{\beta K, t} \ge (1 - \beta) c K - c,
\]
hence $S_{K, t} \gg K$. In the case $\Re S_{\beta K}(t) < 0$ we choose $\delta_j$ for $\beta K < j \le K$ such that $\Re(n_j^{-\I t}) \le - c < 0$, so $S_{K, t} \gg K$ follows analogously. In this way we can select the sequence $(\delta_{j})_{j\ge1}$ inductively such that for infinitely many integers $K$ we have that $S_{K, t} \gg K$, where $t = 2 \pi K$. 
Hence $\LH(\MN, x/3)$ is false.
\end{proof}
%
%

The proof of the preceding theorem can be generalized as follows. Instead of considering intervals of length $3$ from which we select $1$ integer, we can consider intervals of lenght $m\ge4$ from which we select $k<m$ integers. As 
\[
	\Re \sum_{\abs{j} \le k/2}\e^{\I j \frac{2\pi}{m}} \gg \min(k, m-k),
\] 
we can construct the sequence $\MN = \MN_{m,k}$ in such a way that 
\begin{align*}
	N(x) 										&= \frac{k}{m}x + O(1), 						&&\quad \text{for all } x \ge 1,\quad \text{and}\\
	\abs[3]{\sum_{\substack{n\le x\\n\in \MN}}n^{-\I t}} 	&\gg \min\Bigl(\frac{k}{m}, \frac{m-k}{m}\Bigr)x, 		&&\quad\text{on certain (infinite) subsequences of $x$ and }t\sim\frac{2\pi x}{m}.  
\end{align*}
If we let $k\sim m\to \infty$ in a single construction, we can even achieve integer sequences of density $1$ failing $\LH$:
\begin{theorem}
\label{th: counterexample density 1}
For each $\eps>0$, there exists an integer sequence $\MN$ satisfying $N(x) = x + O(x^{1/2+\eps})$ for which $\LH(\MN, x)$ fails.
\end{theorem}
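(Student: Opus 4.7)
I extend the construction in the proof of Theorem~\ref{counterexample conjecture 1} by working at dyadic scales $X_K = 2^K$, with block length $m_K$ that now grows with $K$. Given $\eps \in (0, 1/2)$, fix $\delta$ with $\eps/(2(1-\eps)) < \delta < \eps$ and set $m_K = \lceil X_K^{1/2-\delta}\rceil$ and $t_K = 2\pi X_K/m_K \asymp X_K^{1/2+\delta}$. Partition each interval $(X_{K-1}, X_K]$ into $I_K \asymp X_K^{1/2+\delta}$ disjoint blocks of length $m_K$; at scale $K$ I will omit exactly one integer from each block. Summing geometrically, the total number of omissions up to $x$ is $O(x^{1/2+\delta}) = O(x^{1/2+\eps})$, so $N(x) = x + O(x^{1/2+\eps})$.

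The omissions at scale $K$ are selected inductively, after $\MN\cap(0, X_{K-1}]$ has been fixed. With the above choice of $t_K$, the phases of $N^{-\I t_K}$, as $N$ ranges over a single block at scale $K$, sweep an arc of length $\approx 2\pi X_K/N \in [2\pi, 4\pi]$ with consecutive gaps $\le 4\pi/m_K$. Hence for any target unit vector $e^{\I\theta_K}$, one can select one integer per block whose contribution $N^{-\I t_K}$ lies within angle $2\pi/m_K$ of $e^{\I\theta_K}$. Set
\[
	C_K := \sum_{n_j \le X_{K-1},\, n_j \in \MN} n_j^{-\I t_K} + \sum_{X_{K-1} < N \le X_K} N^{-\I t_K},
\]
choose $e^{\I\theta_K} = -C_K/\abs{C_K}$ (or an arbitrary direction if $C_K = 0$), and omit the correspondingly aligned integer in each block of scale $K$. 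Writing $O_K$ for the sum of the omitted $N^{-\I t_K}$, we have $O_K = I_K e^{\I\theta_K} + O(I_K/m_K)$. Since $\sum_{n_j \le X_K} n_j^{-\I t_K} = C_K - O_K$ and $O_K$ points essentially opposite to $C_K$, we obtain
\[
	\abs[3]{\sum_{n_j \le X_K} n_j^{-\I t_K}} \ge \abs{C_K} + I_K\bigl(1 - o(1)\bigr) \gg X_K^{1/2+\delta}.
\]

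The LH bound for the pair $(X_K, t_K)$ is $X_K^{1/2} t_K^{\eps} \asymp X_K^{1/2 + \eps/2 + \eps \delta}$. The condition $\delta > \eps/(2(1-\eps))$ rearranges to $1/2 + \delta > 1/2 + \eps/2 + \eps\delta$, so $\LH(\MN, x)$ fails at $(X_K, t_K)$ for all sufficiently large $K$; the constraint $X_K \le t_K^B$ holds with $B = 2$. The main obstacle is the block-wise phase alignment: $t_K$ must be chosen large enough ($\ge 2\pi X_K/m_K$) that the phases of $N^{-\I t_K}$ cover the unit circle inside every block at scale $K$, yet small enough that $t_K^{\eps}$ remains a strictly smaller power of $X_K$ than $X_K^\delta$. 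The calibration $m_K \asymp X_K^{1/2-\delta}$, $t_K = 2\pi X_K/m_K$ achieves both simultaneously precisely under the hypothesis $\delta > \eps/(2(1-\eps))$.
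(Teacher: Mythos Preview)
Your argument is correct and follows a genuinely different route from the paper's. Both constructions delete integers so that certain phases align, but the implementations are dual to each other.

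The paper works on a \emph{sparse} family of very short intervals $[M_\nu, M_\nu/\beta_\nu)$ (of relative length $\asymp 1/m$, so that the phase increment $t\log(n'/n)$ is essentially \emph{constant} across the interval). Each such interval is cut into blocks of length $m$, and from each block one \emph{deletes} the $l \asymp m^{1-\eps}$ integers whose phases are furthest from a chosen direction; the sum of the \emph{kept} integers in a block then has size $\asymp l$ in that direction. Outside these short intervals no integer is removed. The large exponential sum and the error in $N(x)$ both come out to be $\asymp M^{1/2+\eps/2+\eps^2}$.

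You instead work on \emph{all} dyadic intervals $(X_{K-1}, X_K]$ and remove only \emph{one} integer per block of length $m_K$, chosen so that the \emph{removed} points $N^{-\I t_K}$ all point (up to $O(1/m_K)$) in the single direction $-C_K/|C_K|$. The sum of removed points thus has magnitude $\asymp I_K \asymp X_K^{1/2+\delta}$, and since it is anti-aligned with $C_K$ the full sum over $\MN\cap[1,X_K]$ has at least this magnitude. Your calibration $\eps/(2(1-\eps)) < \delta < \eps$ simultaneously forces $N(x)=x+O(x^{1/2+\eps})$ and makes the sum beat the LH bound $x^{1/2}|t|^{\eps}$ (taking $\eps'=\eps$, $B=2$ in the LH definition).

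What each buys: the paper's very short intervals make the phase increments nearly equidistributed modulo $2\pi/m$, which lets one control the kept sum precisely via the Dirichlet-kernel identity; this is robust and immediately suggests the multi-deletion variants mentioned after Theorem~\ref{counterexample conjecture 1}. Your approach is more streamlined --- a single deletion per block and a direction chosen adaptively against the already-fixed partial sum $C_K$ --- and avoids the delicate short-interval bookkeeping; it also makes the trade-off between the density of deletions and the failure of LH transparent through the single parameter~$\delta$.
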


\begin{proof}
Let $\eps>0$ be a fixed small number. We inductively define $\MN=\MN_{\eps}$ by removing certain well-chosen integers from $\N_{>0}$. Let $M$ be a large integer and set 
\[
	m\coloneqq \lfloor M^{1/2-\eps}\rfloor, \quad l\coloneqq \lfloor m^{1-\eps}\rfloor, \quad k\coloneqq m-l, \quad \text{and} \quad 1/\beta \coloneqq 1 + \frac{1}{8m}. 
\]
We divide the interval $[M, M/\beta)$ into subintervals $I_{j} = \bigl[M+(j-1)m, M+jm\bigr)$ for $j=1, 2, \dotsc, J$, where $J=\lfloor (1/\beta-1)M/m\rfloor \asymp M^{2\eps}$. From each subinterval $I_{j}$ we will delete about $l$ integers (from the final interval $\bigl[M+Jm, M/\beta\bigr)$ we will delete no integers).

Let $t\coloneqq \frac{2\pi M}{\beta m} \asymp M^{1/2+\eps}$, and suppose without loss of generality that $\Re \sum_{n=1}^{M-1}n^{-\I t} > 0$. Each integer $n\in I_{j}$ is of the form $n = M + (j-1)m + \delta$ for some $\delta\in \{0,1,\dotsc, m-1\}$. We have
\[
	n^{-\I t} = \bigl(M+(j-1)m\bigr)^{-\I t}\exp\Bigl\{-\I t\log \Bigl(1+\frac{\delta}{M+(j-1)m}\Bigr)\Bigr\},
\]
where 
\[
	t\log \Bigl(1+\frac{\delta}{M+(j-1)m}\Bigr) = \frac{t\delta}{M+(j-1)m} + O\Bigl(\frac{1}{mM^{2\eps}}\Bigr).
\]
Also 
\[
	\frac{2\pi\delta}{m} \le \frac{t\delta}{M+(j-1)m} \le \frac{2\pi\delta}{\beta m} = \frac{2\pi\delta}{m} + \frac{2\pi\delta}{8m^{2}},
\]
so that 
\begin{equation}
\label{eq: phase error}
	\abs{t\log \Bigl(1+\frac{\delta}{M+(j-1)m}\Bigr) - \frac{2\pi\delta}{m} } \le \frac{2\pi}{4m},
\end{equation}
provided that $M$ is sufficiently large (depending on $\eps$).

For each $j$, we now choose $\delta_{j}$ so that $\Re \bigl(M + (j-1)m+\delta_{j}\bigr)^{-\I t}$ is maximal; in fact this maximal value of the real part is $1+O(1/m)$. From the interval $I_{j}$, we keep the integers $n$ corresponding to 
\[
	\delta = \delta_{j}, \quad \delta = \delta_{j} \pm 1, \quad \dotsc, \quad \delta = \delta_{j} \pm \lfloor k/2\rfloor \mod m, \quad \text{say } \delta\in \Delta_{j}.
\]
 This yields $k$ or $k+1$ integers, depending on whether $k$ is odd or even, and we delete the other $l$ or $l-1$ integers from $I_{j}$. We have
\[
	\sum_{\delta \in \Delta_{j}}\bigl(M+(j-1)m+\delta\bigr)^{-\I t} 
	= \bigl(M+(j-1)m+\delta_{j}\bigr)^{\I t}\sum_{\delta\in\Delta_{j}} \Bigl(\frac{M+(j-1)m+\delta}{M+(j-1)m+\delta_{j}}\Bigr)^{-\I t}.
\]
Now 
\begin{align*}
	\Re \sum_{\delta\in \Delta_{j}} \Bigl(\frac{M+(j-1)m+\delta}{M+(j-1)m+\delta_{j}}\Bigr)^{-\I t} 
		&\ge 1 + 2\cos\Bigl(\tfrac{3}{2}\cdot\tfrac{2\pi}{m}\Bigr)+\dotsb + 2\cos\Bigl(\bigl(\tfrac{1}{2}+\lfloor \tfrac{k}{2}\rfloor\bigr)\cdot \tfrac{2\pi}{m}\Bigr)\\
		&= l + O\bigl(m^{1-2\eps}\bigr),
\end{align*}
where the first line follows from \eqref{eq: phase error}, and the second line from a short calculation. Hence we find 
\[
	\Re \sum_{\delta\in\Delta_{j}}\bigl(M+(j-1)m+\delta\bigr)^{-\I t} \ge l\bigl(1+O(m^{-\eps})\bigr).
\]
From this it follows that 
\[
	\abs[3]{\sum_{\substack{n\le M/\beta\\ n\in \MN}}n^{-\I t}} \ge Jl\bigl(1+O(m^{-\eps})\bigr) + O(m) \gg M^{1/2+\eps/2 + \eps^{2}}.
\]
Also, for $M\le x< M/\beta$ with $x\in I_{j}$
\begin{align*}
	N(x) 	&= \sum_{\substack{n\le x\\n\in \MN}}1 = N(M) + (j-1)k+O(J+m) = N(M) + (j-1)m + O(Jl) \\
		&= N(M) + (x-M) +O\bigl(M^{1/2+\eps/2 + \eps^{2}}\bigr) = N(M) + (x-M) + O\bigl(x^{1/2+\eps/2 + \eps^{2}}\bigr).
\end{align*}

Inductively applying this deleting procedure on intervals $[M_{\nu}, M_{\nu}/\beta_{\nu})$ for a sequence of $(M_{\nu})_{\nu\ge1}$ with $M_{\nu+1}>M_{\nu}/\beta_{\nu}$ and corresponding sequence $(t_{\nu})_{\nu\ge1}$, $t_{\nu} \asymp M_{\nu}^{1/2+\eps}$, yields a sequence of integers $\MN$ satisfying
\[
	N(x) = x + O\bigl(x^{1/2+\eps/2 + \eps^{2}}\bigr), \quad \sum_{\substack{n\le M_{\nu}/\beta_{\nu}\\n\in \MN}}n^{-\I t_{\nu}} \gg \bigl(M_{\nu}/\beta_{\nu}\bigr)^{1/2+\eps/2 + \eps^{2}}.
\]
In particular, $\LH(\MN, x)$ fails already for $B=2$.
\end{proof}

\begin{remark}
For an increasing integer sequence $\MN$ satisfying $N(x) = x + O_{\epsilon}(x^{1/2+\epsilon})$ for every $\epsilon>0$, we have 
\[
	\sum_{\substack{n\le x\\n\in\MN}}n^{-\I t} = \sum_{\substack{n\le x\\n\in \N_{>0}}}n^{-\I t} + O_{\eps, B}(x^{1/2}\abs{t}^{\eps}),
\]
if $1\le x\le \abs{t}^{B}$ (take $\epsilon = \eps/B$),
so that $\LH(\MN, x)$ is equivalent to $\LH(\N_{>0}, x)$.
\end{remark}

One could argue that introducing this ``conspiracy'' among the $n_{j}$ is artificial and not typical behavior, and that the ``generic'' sequence should still satisfy $\LH$, as per Conjecture \ref{conjecture 2}. To make the notion of generic explicit, we consider two viewpoints: a probabilistic viewpoint and a topological viewpoint. The results of the former viewpoint can be interpreted as a positive answer to Conjecture \ref{conjecture 2}, while the results of the latter viewpoint can be interpreted as a negative answer.

\subsection{Probabilistic viewpoint} 

Let $x_{0}>0$ and $M(x)$ be a right-continuous non-decreasing unbounded function supported on $[x_{0},\infty)$ which satisfies $M(x_{0})=0$ and $M(x)\ll x^{\alpha_{2}}$ for some $\alpha_{2}>0$. 
A natural probabilistic process to generate sequences with counting function $N(x)$ close to $M(x)$ is as follows.
Set $x_{j} = \inf\{x: M(x) \ge j\}$ for $j= 1, 2, \dotsc$. Assume that\footnote{We only assume this for simplicity of the argument: if $M(x_{j}-) < j < M(x_{j}+)$ and $k\ge0$ is such that $x_{j} = x_{j+1} = \dotso = x_{j+k} < x_{j+k+1}$, then one can distribute the ``mass'' of $\dif M(x)$ over the sets $(x_{j-1}, x_{j}]$, $\{x_{j}\}, \dotsc, \{x_{j}\}$ ($k$ times), and $(x_{j}, x_{j+1}]$ in an obvious way and a similar result holds (see \cite[Theorem 1.2]{BrouckeVindas}).} $M(x_{j}) = j$ for every $j$. Then $\dif M(x)$ defines a probability measure on the interval $(x_{j-1}, x_{j}]$ for each $j\ge1$.

\begin{theorem}
\label{probabilistic theorem}
With $M(x)$ and $x_{j}$ as above, select $n_{j}$ randomly and independently from the interval $(x_{j-1}, x_{j}]$ according to the probability distribution $\dif M(x)\big|_{(x_{j-1}, x_{j}]}$. Then with probability $1$ we have for each $x\ge x_{0}$ and $t\in \R$
\begin{equation}
\label{eq: probabilistic bound}
	\abs[3]{\sum_{n_{j}\le x}n_{j}^{-\I t} - \int_{x_{0}}^{x}u^{-\I t}\dif M(u) } \ll \sqrt{M(x)}\bigl(\sqrt{\log(x+1)} + \sqrt{\log(\,\abs{t}+1)}\bigr)\log(x+1).
\end{equation}
\end{theorem}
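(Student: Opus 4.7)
The randomness decouples cleanly: for each fixed $t \in \R$, the random variables
\[
	Z_j(t) := n_j^{-\I t} - \int_{x_{j-1}}^{x_j} u^{-\I t}\dif M(u), \qquad j=1,2,\dotsc,
\]
are independent in $j$, complex-valued with mean zero, and bounded by $|Z_j(t)| \le 2$. Writing $J(x) := \lfloor M(x)\rfloor$ and absorbing the at most one ``straddling'' index $J(x)+1$ into an $O(1)$ boundary term, the quantity whose absolute value is to be controlled in \eqref{eq: probabilistic bound} equals $S_{J(x)}(t) + O(1)$, where $S_j(t) := \sum_{k=1}^{j} Z_k(t)$. Hence it suffices to produce an almost-sure uniform bound for $|S_j(t)|$ of the required shape.

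\textbf{Pointwise tail.} For each fixed $(j, t)$, Hoeffding's inequality applied to the real and imaginary parts of $S_j(t)$ -- each a sum of independent, bounded, mean-zero real random variables -- yields a sub-Gaussian tail
\[
	\Pr\bigl(|S_j(t)| > \lambda\bigr) \le 4\exp\bigl(-c\lambda^2/j\bigr), \qquad \lambda > 0,
\]
for some absolute $c > 0$. Since $(\Re S_j(t))_j$ and $(\Im S_j(t))_j$ are martingales with bounded differences, Doob's maximal inequality extends the same tail (with an absolute constant) to $\max_{k \le j} |S_k(t)|$.

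\textbf{Uniformity in $(x, t)$.} Uniformity in $x$ is essentially free: as $x$ varies, the left-hand side of \eqref{eq: probabilistic bound} changes only by $O(1)$ between consecutive lattice points $x = x_j$, so it is enough to bound $\max_{k \le j}|S_k(t)|$. For uniformity in $t$, the deterministic derivative bound $|\partial_t S_j(t)| \le 2j \log(x_j + 1)$ allows one, on any interval $[-T, T]$, to replace $\sup_{|t| \le T}|S_j(t)|$ up to an additive $O(1)$ by its maximum over a net of mesh $(j \log(x_j+1))^{-1}$ and cardinality $\ll T j \log(x_j + 1)$. A union bound over the net combined with the maximal tail then gives
\[
	\Pr\Bigl(\sup_{|t| \le T} \max_{k \le j}|S_k(t)| > \lambda\Bigr) \ll T j^2 \log(x_j + 1) \exp(-c\lambda^2/j).
\]

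\textbf{Borel--Cantelli.} Specializing to dyadic scales $j = 2^m$, $T = 2^l$ and choosing
\[
	\lambda_{m, l} := K \log(x_{2^m} + 1)\sqrt{2^m(m+l)}
\]
for $K$ sufficiently large, the preceding estimate becomes $\ll 2^{l+2m}\log(x_{2^m}+1)\exp\bigl(-cK^2(\log(x_{2^m}+1))^2(m+l)\bigr)$. For all sufficiently large $m$ one has $\log(x_{2^m}+1) \ge 1$ (using the growth condition $M(x) \ll x^{\alpha_2}$, which forces $x_{2^m} \to \infty$), so the right-hand side decays at least as fast as $(2e^{-cK^2})^{m+l}$ and the double sum over $(m, l) \in \N_{\ge 1}^2$ is finite for $K$ large. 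Borel--Cantelli then ensures that, almost surely, the inequality $\sup_{|t| \le 2^l}\max_{k \le 2^m}|S_k(t)| \le \lambda_{m, l}$ holds for all but finitely many $(m, l)$. Re-indexing via $j \asymp 2^m$, $|t| \asymp 2^l$, $M(x) \asymp j$, and $\log(x+1) \asymp \log(x_j + 1)$ converts this into \eqref{eq: probabilistic bound}. The main technical obstacle is coordinating the $t$-net spacing (whose cardinality depends on $\log x_j$), the dyadic decomposition in $j$ and $|t|$, and the sub-Gaussian tail so that the union-bound sum converges while still delivering the precise logarithmic dependence on $x$ and $|t|$; in particular, the outer $\log(x+1)$ factor in \eqref{eq: probabilistic bound} is precisely what is needed to absorb the $\log x_j$ loss from the net cardinality.
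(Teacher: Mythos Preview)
Your overall strategy is sound and follows a genuinely different route from the paper. The paper restricts first to integer $t=k\in\Z$, applies Hoeffding and Borel--Cantelli over \emph{all} pairs $(J,k)$ (summability in $J$ comes from $\sum_J x_J^{-C^{2}/2}<\infty$, which is precisely where $M(x)\ll x^{\alpha_2}$ enters), and then passes from integer $k$ to real $t$ by writing $S(x,t)=\int_{x_0}^{x^{+}} u^{-\I(t-k)}\dif S(u,k)$ and integrating by parts; that integration by parts is the source of the extra factor $\log(x+1)$ in \eqref{eq: probabilistic bound}. You instead handle real $t$ directly via the Lipschitz bound $|\partial_t S_j(t)|\le 2j\log(x_j+1)$ and a net, with the net cardinality supplying the extra logarithm. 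Both mechanisms are legitimate; yours is standard chaining, while the paper's is slightly slicker and needs no maximal inequality at all (for $x\in(x_{J-1},x_J)$ one simply has $S(x,k)=S_{J-1,k}+O(1)$, so Doob is unnecessary).

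There is, however, a genuine gap in your final re-indexing. The almost-sure bound you establish carries the factor $\log(x_{2^m}+1)$ with $2^{m-1}<J(x)\le 2^m$, and you then want to replace it by $\log(x+1)$. Since $2^m$ may be as large as $2J(x)-1$, this requires $\log x_{2J}\ll\log x_J$, a doubling condition on $(\log x_j)_j$ that is \emph{not} implied by the hypotheses: the theorem assumes only $M(x)\ll x^{\alpha_2}$, with no lower bound on the growth of $M$. Concretely, take $M(x)=\log\log x$, so $x_j=\exp(\e^{j})$; then $\log x_{2J}=\e^{2J}=(\log x_J)^2$, and your final bound comes out too large by a factor of $\log x$. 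The culprit is the dyadic overshoot in $j$. The repair is immediate: drop the dyadic decomposition in $j$ (and Doob) and run the net-plus-union-bound argument for each $J\ge1$ separately, with $\lambda_{J,l}$ carrying the factor $\log(x_J+1)$. Then one always has $x_{J(x)}\le x$, hence $\log(x_{J(x)}+1)\le\log(x+1)$, and the Borel--Cantelli sum over $J$ converges because $\log(x_J+1)\ge(1/\alpha_2)\log J-O(1)$, which is again just $M(x)\ll x^{\alpha_2}$.
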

Note that the sequences $\MN=(n_{j})_{j\ge1}$ generated by the probabilistic process all satisfy $\abs{N(x) - M(x)} \le 1$, so if $M$ is differentiable and satisfies additionally $M(x) \gg x^{\alpha_{1}}$ for some $\alpha_{1}>0$, all these sequences are admissible and $\LH(\MN, M(x))$ holds with probability $1$. 

This theorem is a slight modification of \cite[Theorem 1.2]{BrouckeVindas}, which corresponds to the case $M(x) \ll x/\log x$ (see also Remark 2.3 in that paper). The proof is nearly identical to the proof given in \cite{BrouckeVindas}, but we provide it here also for convenience of the reader.

\begin{proof}
Let $(N_{j})_{j\ge1}$ be a sequence of independent random variables, where $N_{j}$ is distributed according to the probability measure $\dif M\big|_{(x_{j-1},x_{j}]}$. For real $t$ we define the random variable $X_{j,t} = N_{j}^{-\I t}$, with expectation
\[
	E(X_{j,t}) = \int_{x_{j-1}}^{x_{j}} u^{-\I t} \dif M(u).
\]
The sum $S_{J, t} = \sum_{j=1}^{J}X_{j,t}$ has expected value $E(S_{J, t}) = \int_{x_{0}}^{x_{J}} u^{-\I t} \dif M(u)$, and we can bound the probability that $S_{J, t}$ is far away from this value using an inequality of Hoeffding \cite{Hoeffding}. This inequality states that for independent real-valued random variables $Y_{j}$ with $a_{j}\le Y_{j}\le b_{j}$ and with sum $S = \sum_{j=1}^{J}Y_{j}$, we have for every positive $v$ that
\[
	P\bigl(S-E(S) \ge v\bigr) \le \exp\Biggl(-\frac{2v^{2}}{\sum_{j=1}^{J}(b_{j}-a_{j})^{2}}\Biggr).
\]
In our case, applying Hoeffding's inequality with $v_{J, t} = C\sqrt{J}\bigl(\sqrt{\log(x_{J}+1)}+\sqrt{\log(\,\abs{t}+1)}\bigr)$ to the random variables $\Re(X_{j, t}) = \cos(- t \log N_j)$ gives
\[
	P\Bigl(\Re\big(S_{J, t} - E(S_{J, t})\big) \ge v_{J, k}\Bigr) \le \exp\Bigl\{-\frac{C^{2}}{2}\bigl(\log(x_{J}+1)+\log(\,\abs{t}+1)\bigr)\Bigr\}.
\]
Applying the same principle to the random variables $-\Re(X_{j,t})$, $\pm \Im(X_{j,t})$, we obtain
\[
	P\Bigl(\, \abs[2]{S_{J, t} - E(S_{J, t})} \ge \sqrt{2}v_{J, k}\Bigr) \le \frac{4}{(x_{J}+1)^{C^{2}/2}(\,\abs{t}+1)^{C^{2}/2}},
\]
Let now $A_{J,k}$ denote the event $\abs{S_{J, k} - E(S_{J, k})} \ge \sqrt{2}v_{J, k}$. By assumption $M(x) \ll x^{\alpha_{2}}$, so that
\[
	\sum_{J=1}^{\infty}x_{J}^{-C^{2}/2} = \int_{x_{0}}^{\infty}u^{-C^{2}/2}\dif\, \lfloor M(u)\rfloor = \frac{C^{2}}{2}\int_{x_{0}}^{\infty}\lfloor M(u)\rfloor u^{-C^{2}/2-1}\dif u < \infty,
\]
provided that $C^{2}/2 > \alpha_{2}$. Hence, fixing some $C > \max\{\sqrt{2\alpha_{2}},\sqrt{2}\}$ we get that 
\[
	\sum_{J=1}^{\infty}\sum_{k=1}^{\infty}P(A_{J,k}) \le \sum_{J=1}^{\infty}\sum_{k=1}^{\infty} \frac{4}{(x_{J}+1)^{C^{2}/2}(k+1)^{C^{2}/2}} < \infty.
\]
By the Borel--Cantelli lemma, one has that with probability 1 only finitely many of the events $A_{J,k}$ occur. 

To complete the proof we show that a sequence $\MN = (n_{j})_{j\ge1}$ contained in only finitely many sets $A_{J,k}$ satisfies \eqref{eq: probabilistic bound}. Clearly this holds if $x=x_{J}$ and $t=k\in \Z$. Set $S(x,t) = \sum_{n_{j}\le x}n_{j}^{-\I t}$. Then we have for $x\in (x_{J-1},x_{J})$ 
\begin{equation}
\label{eq: critical eq}
	S(x,k) = S_{J-1, k} + O(1) = \int_{x_{0}}^{x}u^{-\I k}\dif M(u) + O\Bigl(\sqrt{M(x)}\bigl(\sqrt{\log(x+1)}+\sqrt{\log(\,\abs{k}+1)}\bigr)\Bigr).
\end{equation}
Finally, if $t \in (k-1,k)$, $k\in\Z$, then 
\begin{align*}
	S(x,t)	&= \int_{x_{0}}^{x^{+}}u^{-\I(t-k)}\dif S(u,k) = S(x,k)x^{-\I(t-k)} +\I(t-k)\int_{x_{0}}^{x}S(u,k)u^{-\I(t-k)-1}\dif u \\
			&= \int_{x_{0}}^{x}u^{-\I k}\dif M(u)x^{-\I(t-k)} + \I(t-k)\int_{x_{0}}^{x}\biggl(\int_{x_{0}}^{u}y^{-\I k}\dif M(y)\biggr)u^{-\I(t-k)-1}\dif u\\
			&\quad + O\Biggl(\sqrt{M(x)}\bigl(\sqrt{\log(x+1)} + \sqrt{\log(\,\abs{t}+1)}\bigr)\Biggr)\\
			 &\quad + O\Biggl(\int_{x_{0}}^{x}\frac{\sqrt{M(u)}}{u}\bigl(\sqrt{\log(u+1)} + \sqrt{\log(\,\abs{t}+1)}\bigr)\dif u\Biggr) \\
			&= \int_{x_{0}}^{x}u^{-\I t}\dif M(u) + O\Bigl(\sqrt{M(x)}\bigl(\sqrt{\log(x+1)} + \sqrt{\log(\,\abs{t}+1)}\bigr) \log(x+1) \Bigr),
\end{align*}
This completes the proof.
\end{proof}

\begin{remark}
The last $\log$-factor in the estimate \eqref{eq: probabilistic bound} can be removed if one assumes $\int_{x_{0}}^{x}\sqrt{M(u)}\dif u/u \ll \sqrt{M(x)}$ (which incidentally also implies $M(x)\gg x^{\alpha_{1}}$ for some $\alpha_{1}>0$). This assumption is fulfilled for many commonly used functions.
\end{remark}

As an example, let us set $x_{0} = 1$ and $M(x) = x-1$. Then the above theorem implies in particular that choosing $n_{j}$, $j\ge1$, uniformly and independently from $(j,j+1]$ yields a sequence $\MN$ satisfying $\LH(\MN, x)$ with probability 1.

For another example related to the integer sequences we have constructed in Subsection \ref{subsec: Integer sequences}, let $k$ be a positive integer and apply the above theorem with $M(x) = (1/k)\lfloor x - k + 1\rfloor$ for $x \ge k$ and $M(x) = 0$ else. We obtain the following interesting dichotomy:
\begin{corollary}
Consider random sequences $\MN = (n_{j})_{j\ge1}$ given by $n_{j} = m j +\delta_{j}$, where the $\delta_{j}$ are selected uniformly and independently from the set $\{0,1,\dotsc,m-1\}$. 
If $\LH(\N_{>0},x)$ is true, then $\LH(\MN, x/m)$ holds with probability $1$. If on the other hand $\LH(\N_{>0}, x)$ is false, then $\LH(\MN, x/m)$ holds with probability $0$.
\end{corollary}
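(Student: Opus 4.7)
The plan is to apply Theorem~\ref{probabilistic theorem} with the step function $M(x) = (1/m)\lfloor x-m+1\rfloor$ on $[m,\infty)$ and $M(x)\equiv 0$ on $[0,m)$. A direct computation gives $x_{j}=mj+m-1$, so the interval $(x_{j-1},x_{j}]$ contains exactly the $m$ integers $mj,mj+1,\dotsc,mj+m-1$, and $\dif M$ restricted to this interval is $1/m$ times counting measure on these integers. Drawing $n_{j}$ from $\dif M\big|_{(x_{j-1},x_{j}]}$ therefore coincides with setting $n_{j}=mj+\delta_{j}$ with $\delta_{j}$ uniform on $\{0,1,\dotsc,m-1\}$. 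The hypotheses of Theorem~\ref{probabilistic theorem} are immediate with $\alpha_{2}=1$.

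The theorem then supplies a single event $\Omega_{0}$ of probability $1$ on which, uniformly in $x\ge m$ and $t\in\R$,
\[
\sum_{n_{j}\le x}n_{j}^{-\I t} = \frac{1}{m}\sum_{n\le x}n^{-\I t} + O\bigl(\sqrt{x/m}\,\bigl(\sqrt{\log(x+1)}+\sqrt{\log(\abs{t}+1)}\bigr)\log(x+1)\bigr),
\]
where extending the right-hand sum down to $n=1$ costs only $O(1)$. In the range $1\le x\le \abs{t}^{B}$ the polylogarithmic factor is $\ll_{\eps}\abs{t}^{\eps}$ for every $\eps>0$, so the error is absorbed into $O_{\eps,B}\bigl((x/m)^{1/2}\abs{t}^{\eps}\bigr)$.

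Next I would observe that the main term $(1/m)\int_{n_{1}}^{x}u^{-\I t}\dif u$ of $\LH(\MN,x/m)$ equals $(1/m)\cdot x^{1-\I t}/(1-\I t)+O(1/m)$; that is, it is $1/m$ times the main term of $\LH(\N_{>0},x)$ up to $O(1)$. Combining this with the almost-sure identity above, one sees that for every $\omega\in\Omega_{0}$ the statement $\LH(\MN,x/m)$ is logically equivalent to
\[
\sum_{n\le x}n^{-\I t} = \frac{x^{1-\I t}}{1-\I t} + O_{\eps,B}\bigl(x^{1/2}\abs{t}^{\eps}\bigr),
\]
which is precisely $\LH(\N_{>0},x)$. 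Since the latter is a deterministic statement, the dichotomy follows immediately: if $\LH(\N_{>0},x)$ holds, every $\omega\in\Omega_{0}$ satisfies $\LH(\MN,x/m)$, giving probability $1$; if $\LH(\N_{>0},x)$ fails, no $\omega\in\Omega_{0}$ can satisfy $\LH(\MN,x/m)$, giving probability $0$.

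No step poses a genuine obstacle; the one place that requires a little care is the bookkeeping converting the discrete main term $(1/m)\sum_{n\le x}n^{-\I t}$ produced by Theorem~\ref{probabilistic theorem} into the smooth main term appearing in the definition of $\LH(\MN,x/m)$, so that the discrepancy between the two LH statements amounts to exactly $1/m$ times the classical Lindel\"of error.
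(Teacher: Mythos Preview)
Your proposal is correct and follows exactly the approach the paper intends: it sets up the corollary by saying ``apply the above theorem with $M(x) = (1/k)\lfloor x - k + 1\rfloor$'' and leaves the details implicit, and you have supplied precisely those details---the identification of $(x_{j-1},x_{j}]$ with $\{mj,\dotsc,mj+m-1\}$, the recognition that $\int u^{-\I t}\dif M(u)$ becomes $(1/m)\sum_{n\le x}n^{-\I t}$, and the observation that the residual discrepancy between $\LH(\MN,x/m)$ and $\LH(\N_{>0},x)$ is exactly the classical Lindel\"of error scaled by $1/m$.
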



The ideas of Theorem \ref{probabilistic theorem} can be applied in more general settings. For example, consider $M(x)$ and $x_j$ as above and let $(j_{l})_{l\ge0}$ be an increasing sequence of integers with $j_{0}=0$. Instead of selecting $n_{j}$ randomly from $(x_{j-1},x_{j}]$, we select the $j_{l} - j_{l-1}$ points
\[
	n_{j_{l-1}+1},\; n_{j_{l-1} + 2},\dotsc ,n_{j_{l}} \quad \text{randomly and independently in the interval } (x_{j_{l-1}}, x_{j_{l}}],
\]
according to the probability distribution $\frac{1}{j_{l}-j_{l-1}}\dif M(x)\big|_{(x_{j_{l-1}}, x_{j_{l}}]}$.  If $j_{l}-j_{l-1} \ll \sqrt{j_{l-1}}$, 
then with probability $1$ \eqref{eq: probabilistic bound} holds for each $x \geq x_{0}$ and $t \in \R$.
Indeed, the proof can be adapted by restricting to the events $A_{j_{l},k}$, $l\ge1$, $k\in \Z$. The critical equation is \eqref{eq: critical eq}, where the $O(1)$ term now needs to be replaced by $O(j_{l}-j_{l-1})$ if $x \in (x_{j_{l-1}}, x_{j_{l}})$.

Any such random sequence described above has a counting function satisfying $N(x) = M(x) + O(E(x))$, where $E(x)$ is a non-decreasing function with $E(x_{j_{l-1}}) \ll j_{l}-j_{l-1}$. Hence if $M$ is differentiable and satisfies $M(x) \gg x^{\alpha_{1}}$ for some $\alpha_{1}>0$, all these sequences are again admissible and $\LH(\MN, M(x))$ holds with probability 1.

Another probabilistic result in favor of Conjecture \ref{conjecture 2} is the following.

\begin{theorem}
\label{probabilistic theorem squareroot}
	Let $A, K > 0$ and $0 < \theta < 1$. Select $n_{j}$ randomly and uniformly from the interval $\bigl(j/A - K j^\theta, j/A + K j^\theta\bigr] \cap \R_{>0}$ for every $j \ge 1$. Then the counting function $N(x)$ of the sequence $\MN = (n_j)_{j\ge1}$ satisfies $N(x) = A x + O(x^\theta)$ and we have with probability $1$ for each $x \geq 1$ and $ t \in \R$
	\begin{equation}\label{eq: probabilistic theorem squareroot}
		\sum_{n_{j}\le x}n_{j}^{-\I t} = A \frac{x^{1-\I t}}{1 - \I t} + O\Bigl(\sqrt{x}\bigl(\sqrt{\log(x+1)} + \sqrt{\log(\,\abs{t}+1)}\bigr)\Bigr) + O(x^{\theta} + x^{1 - \theta}).
	\end{equation}
	In particular in the case $\theta = 1/2$ the statement $\LH(\MN, Ax)$ holds with probability $1$. 
\end{theorem}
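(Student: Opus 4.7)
My plan is to follow the template of the proof of Theorem \ref{probabilistic theorem}, with two new ingredients to handle the heavy overlap of the intervals $I_{j} \coloneqq (j/A - Kj^{\theta}, j/A + Kj^{\theta}] \cap \R_{>0}$. The first observation is that the set of indices for which the event $\{n_{j} \le x\}$ is random (that is, $j/A - Kj^{\theta} \le x < j/A + Kj^{\theta}$) is an interval in $\N$ of length $O(x^{\theta})$. This immediately yields $N(x) = Ax + O(x^{\theta})$ deterministically, and also shows that for $J = \lfloor Ax \rfloor$,
\[
	\sum_{n_{j} \le x} n_{j}^{-\I t} = S_{J, t} + O(x^{\theta}), \qquad S_{J, t} \coloneqq \sum_{j=1}^{J} n_{j}^{-\I t},
\]
since the indices on which $\{n_{j} \le x\}$ and $\{j \le J\}$ disagree lie in this boundary interval and each term has modulus $1$. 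It therefore suffices to estimate $S_{J, t}$.

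For the random fluctuation I apply Hoeffding's inequality exactly as in the proof of Theorem \ref{probabilistic theorem}: since the real and imaginary parts of $n_{j}^{-\I t}$ take values in $[-1, 1]$, with threshold $v_{J, k} = C \sqrt{J} \bigl(\sqrt{\log(J+1)} + \sqrt{\log(|k|+1)}\bigr)$ one gets
\[
	P\bigl(|S_{J, k} - E(S_{J, k})| \ge \sqrt{2}\, v_{J, k}\bigr) \ll (J+1)^{-C^{2}/2}(|k|+1)^{-C^{2}/2}.
\]
Fixing any $C$ with $C^{2} > 2$, the doubly indexed series over $(J, k) \in \N \times \Z$ converges and Borel--Cantelli provides the a.s.\ bound $|S_{J, k} - E(S_{J, k})| \ll \sqrt{J}(\sqrt{\log(J+1)} + \sqrt{\log(|k|+1)})$ simultaneously for all $J, k$. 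The passage from integer $t = k$ to arbitrary real $t$ is carried out by the same Abel-summation manoeuvre as at the end of the proof of Theorem \ref{probabilistic theorem}, writing $S(x, t) = \int u^{-\I(t-k)}\, \dif S(u, k)$ with $k = \lfloor t\rfloor$ and integrating by parts.

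The main technical step is then the expectation calculation, which I reduce to a density estimate. Since $n_{j}$ is uniform on $I_{j}$,
\[
	E(S_{J, t}) = \int_{0}^{\infty} u^{-\I t}\, \rho_{J}(u)\, \dif u, \qquad \rho_{J}(u) \coloneqq \sum_{j=1}^{J} \frac{\mathbf{1}_{I_{j}}(u)}{|I_{j}|}.
\]
For $u$ in the bulk $[C_{0}, J/A - C_{0} x^{\theta}]$, the condition $u \in I_{j}$ confines $j$ to an interval $[j_{-}(u), j_{+}(u))$ of length $2AK(Au)^{\theta} + O(u^{2\theta - 1})$, on which $|I_{j}|^{-1} = (2K(Au)^{\theta})^{-1}(1 + O(u^{\theta - 1}))$; approximating the resulting sum by an integral via Euler--Maclaurin and expanding $j_{\pm}^{1-\theta}$ in powers of $(Au)^{\theta - 1}$ produces the leading constant $A$ with error $O(u^{-\theta} + u^{\theta - 1})$. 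Outside the bulk, $\rho_{J}(u) - A \mathbf{1}_{[0, J/A]}(u)$ is bounded and supported on a set of measure $O(x^{\theta})$ (a neighbourhood of size $O(x^{\theta})$ around $u = J/A$, plus an $O(1)$-region near $u = 0$). Because $|u^{-\I t}| = 1$, no oscillatory cancellation is needed; integrating these bounds gives
\[
	E(S_{J, t}) = A \frac{(J/A)^{1-\I t}}{1-\I t} + O(x^{\theta} + x^{1-\theta}),
\]
and replacing $J/A$ by $x$ (an error of $O(x^{\theta})$ via the elementary estimate $|y^{1-\I t} - x^{1-\I t}| \le |1 - \I t|\, |y - x|$) completes the computation. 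I expect the main obstacle to be precisely this density estimate for $\rho_{J}$, which requires simultaneously tracking the Euler--Maclaurin discretisation error $O(u^{-\theta})$ and the nonlinearity error $O(u^{\theta - 1})$ coming from the curvature of the endpoint maps $j \mapsto j/A \pm K j^{\theta}$.
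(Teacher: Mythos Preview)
Your proposal is correct and follows essentially the same approach as the paper's proof: the density $\rho_{J}$ you introduce is precisely the function the paper calls $f$ (after the rescaling $A=1$), and your Euler--Maclaurin estimate $\rho_{J}(u)=A+O(u^{-\theta}+u^{\theta-1})$ matches the paper's computation $f(x)=1+O(x^{-\theta}+x^{\theta-1})$; the Hoeffding/Borel--Cantelli step and the passage from integer to real $t$ via partial summation are identical. The only cosmetic difference is that the paper rescales to $A=1$ at the outset, whereas you carry $A$ through explicitly.
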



\begin{proof}
By rescaling we can assume that $A=1$. Let $(N_j)_{j\ge1}$ be a sequence of independent random variables, where $N_j$ is uniformly distributed in the interval $I_j := (j - K j^\theta, j + K j^\theta] \cap \R_{>0}$. We define the random variable $X_{j, t} = N_j^{-\I t}$ for real $t$ with expectation
\[
	E(X_{j, t}) = \frac{1}{|I_{j}|} \int_{I_j} u^{-\I t} \dif u = \frac{1}{2Kj^{\theta}}\int_{I_{j}}u^{-\I t}\dif u,
\]
the last equality holding when $j \ge K^{1/(1-\theta)}$. The expectation of the sum $S_{J, t} = \sum_{j=1}^J X_{j,t}$ is therefore
\[
	E(S_{J, t}) 
	= \int_{1}^J f(u) u^{-it} \dif u + O(J^\theta), \quad \text{where } f(u) = \frac{1}{2K} \sum_{u \in I_j} j^{-\theta}.
\]
Now we want to see that $f(x)$ is close to $1$ and proceed as in the proof of Theorem \ref{probabilistic theorem}. For $x > 0$ let $L, M$ be the positive integers such that $x$ lies exclusively in the intervals $I_{L+1}, I_{L+2}, ..., I_M $. Then $x = L + O(L^\theta) = L + O(x^\theta)$, so
\[
	x = L + K L^\theta + O(1) = L + K \big(x + O(x^\theta)\big)^\theta + O(1) = L + K x^\theta + O(1 + x^{2\theta - 1}).
\]
Hence $L = x - K x^\theta + O(1 + x^{2\theta - 1})$ and similarly $M = x + K x^\theta + O(1 + x^{2\theta - 1})$. Now
\begin{align*}
	f(x) &= \frac{1}{2K} \sum_{j = L+1}^M j^{-\theta} = \frac{1}{2K} \sum_{x - K x^\theta < j \leq x + K x^\theta} j^{-\theta} + O(x^{-\theta} + x^{\theta -1})\\
	&= \frac{1}{2K (1-\theta)} \big((x + K x^\theta)^{1-\theta} - (x - K x^\theta)^{1-\theta} \big) + O(x^{-\theta} + x^{\theta -1}) \\
	&= 1+ O(x^{-\theta}+x^{\theta-1}),
\end{align*}
so we obtain
\begin{equation}\label{eq: integral with f}
	E(S_{J, t}) = \int_{1}^J f(u) u^{-\I t} \dif u + O(J^\theta) = \frac{J^{1 - \I t}}{1 - \I t} + O(J^\theta + J^{1-\theta}).
\end{equation}
	
Now we continue as in the proof of Theorem \ref{probabilistic theorem}. Applying Hoeffding's inequality to $\pm \Re(S_{J,t} - E(S_{J,t}))$ and $\pm \Im(S_{J,t} - E(S_{J,t}))$ with $v_{J, t} = 2 \sqrt{J} \bigl(\sqrt{\log(J+1)}+\sqrt{\log(\,\abs{t}+1)}\bigr)$ yields
\[
	P\bigl(\,\abs{S_{J, t} - E(S_{J, t})} \geq \sqrt{2} v_{J, t}\bigr) \le \frac{4}{(J+1)^{2}(\,\abs{t}+1)^{2}}.
\]
Let $A_{J, k}$ denote the event $\abs{S_{J, k} - E(S_{J, k})} \geq \sqrt{2} v_{J, k}$ for integers $J>0$ and $k$. Using the Borel--Cantelli lemma we have that with probability $1$ only finitely many of the events $A_{J, k}$ occur. Now we show that a sequence $\MN = (n_j)_{j\ge1}$ contained in only finitely many sets $A_{J, k}$ satisfies (\ref{eq: probabilistic theorem squareroot}). For $x\ge1$ and $t\in \R$, set $S(x, t) = \sum_{n_j \leq x} n_j^{-\I t}$. For $t=k\in\Z$ and $J=\lfloor x\rfloor$ we have
\begin{align*}
	S(x, k) &= S_{J, k} + O(x^{\theta})\\
	&= E(S_{J},k) + O\bigl(\sqrt{x}\bigl(\sqrt{\log(x+1)}+\sqrt{\log(\,\abs{k}+1)}\bigr)\bigr) + O(x^\theta)\\
	&= \frac{x^{1-\I k}}{1-\I k} + O\bigl(\sqrt{x}\bigl(\sqrt{\log(x+1)}+\sqrt{\log(\,\abs{k}+1)}\bigr)\bigr) + O(x^\theta + x^{1-\theta}),
\end{align*}
where we have used (\ref{eq: integral with f}). For $t \in (k-1, k)$, $k\in \Z$ we apply integration by parts as before to obtain
\[
	S(x, t) = \frac{x^{1-\I t}}{1-\I t} + O\bigl(\sqrt{x}\bigl(\sqrt{\log(x+1)}+\sqrt{\log(\,\abs{t}+1)}\bigr)\bigr) + O(x^\theta + x^{1-\theta}).
\]
This completes the proof.
\end{proof}

One might expect that if $\theta<1/2$, $\LH(\MN, Ax)$ should also hold with probability $1$ as we have good control on the sequence $(n_{j})_{j\ge1}$. However, since the jump discontinuities of the function $f(x)$ in the proof are of size $x^{-\theta}$, it is not clear if the error term $O(J^{1-\theta})$ in \eqref{eq: integral with f} can be improved. If we set $F(x) = \int_{1}^{x}f(u)\dif u$, then all considered sequences have counting function satisfying
\[
	N(x) = F(x) + O(x^{\theta}) = Ax + O(x^{\theta}+x^{1-\theta}),
\]
and $\LH(\MN, F(x))$ holds with probability $1$ provided that $\theta\le 1/2$.

\subsection{Topological viewpoint}
Next we will consider topological spaces of sequences for which the subspace of sequences satisfying LH is \emph{small}. For this we will use the notion of Baire category. We recall that if $X$ is a locally compact Hausdorff space or a complete metric space, then it is a Baire space: the countable union of nowhere dense sets has empty interior. Such countable unions of nowhere dense sets are called \emph{meagre} sets or \emph{sets of first category}, and in Baire spaces they can be considered topologically small or negligible. We refer to \cite{Oxtoby} for more background on Baire category theory, as well as its relation to measure theory.

Given two increasing sequences $(a_{j})_{j\ge1}$ and $(b_{j})_{j\ge1}$ with $a_{j}<b_{j}$, we set 
\[
	X = \prod_{j=1}^{\infty}[a_{j}, b_{j}]= \bigl\{\mathcal{N}=(n_{j})_{j\ge1}: a_{j} \le n_{j} \le b_{j}\bigr\}.
\]
We do not require that $b_{j} \le a_{j+1}$, so the sequences in $X$ need not be non-decreasing. To fix ideas, we suppose that 
\begin{gather}
	a_{j}\sim b_{j} \sim j 	\label{intervals around j}\\
	b_{j}-a_{j} \gg 1 	\label{length intervals bounded below}	
\end{gather}
Assumption \eqref{intervals around j} implies that for any $(n_{j})_{j\ge1} \in X$, its counting function satisfies $N(x) = \sum_{n_{j}\le x}1 \sim x$.

We equip $X$ with two topologies. First we consider $\tau_{\Pi}$, the product topology or the topology of pointwise convergence. Clearly $(X,\tau_{\Pi})$ is a compact Hausdorff space. Second, we consider the normalized uniform metric $d$, defined as
\[
	d(\mathcal{N}_{1},\mathcal{N}_{2}) = \sup_{j\ge1}\frac{\abs{n_{1,j}-n_{2,j}}}{b_{j}-a_{j}}, \quad \text{ where } \mathcal{N}_{i} = (n_{i,j})_{j\ge1}, \quad i=1,2.
\]
It is straightforward to verify that $(X,d)$ is a complete metric space. We denote the associated topology by $\tau_{d}$. Clearly this topology is finer that the product topology: every $\Pi$-open (resp.\ $\Pi$-closed) set is also $d$-open (resp.\ $d$-closed). The nowhere dense sets are not comparable however: there are $\Pi$-nowhere dense sets which are not $d$-nowhere dense, and vice-versa.

It turns out that not only LH is a ``rare'' occurrence in the space $X$, but in fact any asymptotic behavior of the exponential sums $\sum_{n_j\le x} n_{j}^{-\I t}$ is non-typical.
\begin{theorem}
\label{categoric theorem}
Let $F(x,t)$ be a function which is right continuous in $x$. We consider the subspace $Y(F)$ of $X$ of those sequences whose exponential sums behave like $F$:
\begin{multline*}
	Y(F) = \biggl\{ \mathcal{N}\in X: \exists x_{0}>1, \exists 0< c_{1} < 1 < c_{2}, \exists \eta \in [0,1): \\
	\forall x, t \text{ with } x\ge x_{0} \text{ and } x^{c_{1}} \le \abs{t} \le x^{c_{2}}: \abs[3]{\sum_{n_{j}\le x}n_{j}^{-\I t} - F(x,t)} \le \eta x\biggr\}.
\end{multline*}
Then $Y(F)$ is both a $\Pi$-meagre and a $d$-meagre subset of $X$.
\end{theorem}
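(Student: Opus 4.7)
The plan is to realize $Y(F)$ as a countable union of sets that are simultaneously $\Pi$-nowhere dense and $d$-nowhere dense. First I would discretize the parameters: if $\MN\in Y(F)$ with real parameters $(x_{0},c_{1},c_{2},\eta)$, then $\MN$ also lies in the corresponding set for any rational $x_{0}'\ge x_{0}$, $c_{1}'\in[c_{1},1)\cap\Q$, $c_{2}'\in(1,c_{2}]\cap\Q$, and $\eta'\in[\eta,1)\cap\Q$. Hence
\[
    Y(F) = \bigcup_{(x_{0},c_{1},c_{2},\eta)} Y(F,x_{0},c_{1},c_{2},\eta),
\]
where the union ranges over the countable set of rational tuples, and it suffices to show each $Y_{0}:=Y(F,x_{0},c_{1},c_{2},\eta)$ is nowhere dense in both topologies.

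Fix such a tuple. Given a non-empty open set $U$ in either topology, I will produce $\MN^{\ast}\in U$ and an open neighborhood $V\subseteq U$ of $\MN^{\ast}$ on which the defining inequality of $Y_{0}$ is violated with positive margin; this shows that $\overline{Y_{0}}$ has empty interior. Let $c_{0}>0$ be a positive lower bound for $b_{j}-a_{j}$ (existing by \eqref{length intervals bounded below}), and in the $d$-case fix $\MN_{0}\in U$ with $B_{d}(\MN_{0},\eps)\subseteq U$. Following the phase-alignment idea of Section~\ref{subsec: Integer sequences}, I would select a large $x$ outside the countable set $\{a_{j}\}\cup\{b_{j}\}\cup\{n_{0,j}\}$ and take $t$ of order $x/c_{0}$ (resp.\ $x/(\eps c_{0})$ in the $d$-case); since $c_{1}<1<c_{2}$, such $t$ lies in $[x^{c_{1}},x^{c_{2}}]$ for $x$ large enough. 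For each index $j\le J_{x}^{+}:=\max\{j:a_{j}\le x\}\sim x$, the admissible range for $n_{j}$ (the intersection of the appropriate neighborhood with $[a_{j},b_{j}]$) has length $\ge c_{0}$ in the $\Pi$-case (resp.\ $\gtrsim \eps c_{0}$ in the $d$-case), so with this choice of $t$ the phase $-t\log n_{j}$ sweeps out an interval of length $\ge 2\pi$ as $n_{j}$ ranges over the admissible set. Consequently I can choose $n_{j}^{\ast}$ so that $(n_{j}^{\ast})^{-\I t}=\e^{\I\theta}$ for a single prescribed direction $\theta$; summing and using $N(x)\sim x$ yields
\[
    S_{\MN^{\ast}}(x,t) = \sum_{n_{j}^{\ast}\le x}(n_{j}^{\ast})^{-\I t} = N(x)\,\e^{\I\theta}+O(1) = x\,\e^{\I\theta}+o(x).
\]
Taking $\e^{\I\theta}$ opposite to $F(x,t)/\abs{F(x,t)}$ (or arbitrary when $F(x,t)=0$) yields $\abs{S_{\MN^{\ast}}(x,t)-F(x,t)}\ge x-o(x)$, which exceeds $\eta x$ by margin $\asymp (1-\eta)x$ for $x$ large.

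To promote this pointwise violation to an open neighborhood, I would quantify the sensitivity of $S(x,t)$: if $|\Delta n_{j}|\le \delta'$ for each $j$, then $|\Delta S(x,t)|\le \sum_{j\le J_{x}^{+}}t\delta'/n_{j} \ll t\delta'\log x$. In the $\Pi$-case I take $V$ to be the basic open set that pins $n_{j}$ within $\delta'$ of $n_{j}^{\ast}$ for each $j\le J_{x}^{+}$ (and respects $U$'s restrictions); choosing $\delta'<\min_{j\le J_{x}^{+}}|n_{j}^{\ast}-x|$ keeps the index set $\{j:n_{j}\le x\}$ constant on $V$, and shrinking $\delta'$ further makes $t\delta'\log x$ smaller than the margin. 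In the $d$-case I take $V=B_{d}(\MN^{\ast},\eps')$ with $\eps'>0$ chosen so that both $V\subseteq U$ and $t\eps'\log x$ is below the margin. Either way, $V\cap Y_{0}=\emptyset$, which forces $\MN^{\ast}\notin\overline{Y_{0}}$ and hence that $\overline{Y_{0}}$ has empty interior. The main technical obstacle is precisely this balancing of scales: forcing the phase to sweep $2\pi$ compels $t\asymp x$, which inflates the sensitivity of $S$ to scale $x\log x$, comparable to (and in fact exceeding) the target margin of order $x$; the remedy is that once the pair $(x,t)$ is fixed, one has full freedom to shrink the tolerance in $V$ so as to absorb any fixed factor of $\log x$.
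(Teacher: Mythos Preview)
Your approach is correct and shares with the paper the core phase-alignment idea, but the packaging differs. The paper decomposes $Y(F)$ similarly into countably many pieces $Y(F;M,k,l)$ and then proves two things: each piece is $\Pi$-\emph{closed} (this is the only place the right-continuity of $F$ is used), and each piece has empty $d$-\emph{interior}. Since $\tau_{d}$ refines $\tau_{\Pi}$, these two facts immediately yield nowhere density in both topologies, and crucially the empty-$d$-interior step only requires exhibiting a \emph{single} point $\MN'\notin Y_{0}$ inside any given $d$-ball. You instead bypass closedness entirely and, in each topology separately, manufacture a whole open neighbourhood $V\subseteq U$ disjoint from $Y_{0}$. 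This is a bit more work (you must control $S_{\MN}(x,t)$ uniformly over $V$ rather than at one point) but has the pleasant consequence that you never invoke the right-continuity of $F$, so your argument in fact establishes the theorem without that hypothesis.

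Two small corrections are in order. First, in the $\Pi$-case the admissible range has length $\ge c_{0}$ only for the cofinitely many $j$ not constrained by the basic open set inside $U$; the finitely many constrained indices contribute $O(1)$ to the sum, which is harmless. Second, and more substantively, your sensitivity bound $\abs{\Delta S(x,t)}\ll t\delta'\log x$ presumes a uniform perturbation $\abs{\Delta n_{j}}\le\delta'$. That is correct for the $\Pi$-neighbourhood but not for the $d$-ball, where $\abs{\Delta n_{j}}\le\eps'(b_{j}-a_{j})$ and $b_{j}-a_{j}$ need not be bounded; the honest estimate is $t\eps'\sum_{j\le J_{x}^{+}}(b_{j}-a_{j})/n_{j}$, which under \eqref{intervals around j} alone is only $o(t\eps' x)$, not $O(t\eps'\log x)$. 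Your closing remark already supplies the remedy---once $(x,t)$ is fixed this sum is a finite constant and $\eps'$ may be shrunk to absorb it---so the argument survives, but the displayed bound should be amended. Likewise, to keep the index set $\{j:n_{j}\le x\}$ constant on the $d$-ball you need $\eps'<\min_{j\le J_{x}^{+}}\abs{n_{j}^{\ast}-x}/(b_{j}-a_{j})$ rather than the uniform $\delta'$ condition you wrote for the $\Pi$-case.
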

\begin{proof}
Note that 
\[
	Y(F) = \bigcup_{M>1}\bigcup_{k>1}\bigcup_{l>1}Y(F; M,k,l),
\]
where 
\begin{multline*}
	Y(F;M,k,l) = \\
	\biggl\{\mathcal{N}\in X: \forall x, t \text{ with } x\ge M \text{ and } x^{1-1/k}\le \abs{t} \le x^{1+1/k}:  \abs[3]{\sum_{n_{j}\le x}n_{j}^{-\I t} - F(x,t)} \le (1-1/l) x\biggr\}.
\end{multline*}
It suffices to show that each $Y(F;M,k,l)$ is both $\Pi$-nowhere dense and $d$-nowhere dense. We show something slightly stronger, namely that $Y(F;M,k,l)$ is $\Pi$-closed and has empty $d$-interior. Let $\mathcal{N} = (n_{j})_{j\ge1}$ be an element of the $\Pi$-closure of $Y(F; M, k, l)$, and fix some $x$ and $t$ with $x\ge M$ and $x^{1-1/k} \le \abs{t} \le x^{1+1/k}$. Let $(\mathcal{N}_{m})_{m\ge1}$ be a sequence in $Y(F; M, k, l)$ converging pointwise to $\mathcal{N}$. Let $\eps>0$ be sufficiently small such that $n_{j} > x \implies n_{j}\ge x+2\eps$. Then there is some $m(\eps)$ such that 
\[	
	m\ge m(\eps) \implies \bigl(n_{j}\le x \iff n_{m, j}\le x+\eps\bigr).
\]
Hence
\begin{align*}
	\abs[3]{\sum_{n_{j}\le x}n_{j}^{-\I t} - F(x,t)}	&= \lim_{m\to \infty}\abs[3]{\sum_{n_{j}\le x}n_{m,j}^{-\I t}-F(x,t)}\\
	&=\lim_{m\to\infty}\abs[3]{\sum_{n_{m,j}\le x+\eps}n_{m,j}^{-\I t} - F(x,t)} \le (1-1/l)x + \abs{F(x+\eps,t)-F(x,t)}.
\end{align*}
As $F(x,t)$ is right-continuous in $x$ and $\eps$ can be taken arbitrarily small, $\mathcal{N} \in Y(F;M,k,l)$. This shows that $Y(F;M,k,l)$ is $\Pi$-closed.

Let now $\mathcal{N} \in Y(F;M,k,l)$ and $\eps>0$. We show that $B(\mathcal{N}, \eps) = \{\mathcal{N}'\in X: d(\mathcal{N}, \mathcal{N}')<\eps\}$ is not contained in $Y(F;M,k,l)$. For $x$ and $t$ with $t=x^{1+1/k}$, we have $t \ge 4\pi x/(\eps(b_{j}-a_{j}))$ for all $j$ if $x$ is sufficiently large, in view of \eqref{length intervals bounded below}. For each $j\ge j_{0}$ with $n_{j} \le x$, pick $n_{j}' \in [a_{j}, b_{j}]$ with $\abs[0]{n_{j}-n_{j}'}/(b_{j}-a_{j}) < \eps$ and such that $\arg \bigl((n_{j}')^{-\I t}\bigr) = -\arg F(x,t)$. This is possible, because 
\[
	(n_{j}')^{-\I t} = n_{j}^{-\I t}\exp \Bigl(-\I t\log\frac{n_{j}'}{n_{j}}\Bigr) \quad\text{and}
	\quad \abs[2]{\log\Bigl(\frac{n_{j}\pm\eps(b_{j}-a_{j})}{n_{j}}\Bigr)} > \frac{\eps(b_{j}-a_{j})}{2n_{j}} \ge \frac{2\pi}{t},
\]
if $j_{0}$ is sufficiently large so that $(b_{j}-a_{j})/a_{j} \le 1$ say.
For $j < j_{0}$ or $j$ with $n_{j}>x$, we set $n_{j}'=n_{j}$. Then $\mathcal{N}'=(n_{j}')_{j\ge1} \in B(\mathcal{N}, \eps)$ and $N_{\mathcal{N}'}(x) \sim N_{\mathcal{N}}(x) \sim x$, but
\[
	\abs[3]{\sum_{n_{j}' \le x}(n_{j}')^{-\I t} -F(x,t)} \ge \abs[3]{\sum_{n_{j} \le x}(n_{j}')^{-\I t} - F(x,t)} +o(x) > \Bigl(1-\frac{1}{l}\Bigr)x,
\] 
provided that $x$ is sufficiently large. Hence $\mathcal{N}'\not\in Y(F;M,k,l)$. As $\mathcal{N}$ and $\eps$ were arbitrary, we have shown that $Y(F;M,k,l)$ has empty $d$-interior.
\end{proof}

If we now suppose that $b_{j}-a_{j} \ll_{A} j/(\log j)^{A}$ for all $A>0$, then uniformly for $\MN \in X$ $N_{\MN}(x) - x \ll_{A} x/(\log x)^{A}$ for all $A>0$. If we omit the requirement that $\MN$ should be increasing, then each $\MN \in X$ is admissible with $M(x)=x$. However, the set of sequences $\MN$ satisfying $\LH(\MN, x)$ is clearly contained in $Y(F)$, where $F(x,t) = x^{1-\I t}/(1-\I t)$. In fact, for any fixed $B>1$ and $\eps<1/2$, the set of sequences satisfying
\[
	\forall x,t \text{ with } a_{1}\le x\le \abs{t}^{B}: \abs[3]{\sum_{n_{j}\le x}n_{j}^{-\I t}-\frac{x^{1-\I t}}{1-\I t}} \ll x^{1/2}\abs{t}^{\eps}
\]
is already contained in $Y(F)$. So in the topological sense, the ``generic'' sequence $\MN$ in $X$ does not satisfy $\LH(\MN, x)$.

Of course, many variations of the above construction of topological spaces of sequences are possible. The main philosophy is that any prescribed asymptotic behavior of the exponential sums $\sum_{n_{j}\le x} n_{j}^{-\I t}$ is an unstable property: one can always find small perturbations of the sequence $(n_{j})_{j\ge1}$ which completely destroy this asymptotic behavior.

\bigskip

Suppose now that there is a probability measure $P_{j}$ defined on each interval $[a_{j},b_{j}]$. The Kolmogorov extension theorem then yields a ($\tau_{\Pi}$-)Radon probability measure $P$ on $X$ which extends finite products of the $P_{j}$: for every $J$ and Borel set $A \subseteq \prod_{j=1}^{J}[a_{j},b_{j}]$, 
\[
	P\biggl(A \times \prod_{j=J+1}^{\infty}[a_{j},b_{j}]\biggr) = P_{1}\times \dotsb \times P_{J}(A) = \int_{A}\dif P_{1}(u_{1})\dotsm \dif P_{J}(u_{J}).
\]
The random variables $N_{J}: (n_{j})_{j\ge1} \mapsto n_{J}$ are then independent and have distribution $P_{J}$.
Under suitable conditions, this construction gives rise to a counting function $M(x)$ for which $P$-almost all sequences $\MN \in X$ satisfy $\LH(\MN, M(x))$.

For example, let $a_{j} = j$, $b_{j} = j+1$, $M(x)$ a right-continuous non-decreasing function such that $M(j+1)=j$ and $M(x) < j$ if $x < j+1$. If we set $P_{j} = \dif M \big|_{(j, j+1]}$, then the probability measure $P$ is an explicit manifestation of the probability measure which is implicit in the statement of Theorem \ref{probabilistic theorem}. (The event $\{ (n_{j})_{j\ge1}: n_{j}=a_{j} \text{ for some } j\}$ has probability zero.) Hence Theorem \ref{probabilistic theorem} yields a probability measure on $X$ with respect to which almost all sequences $\MN \in X$ satisfy $\LH(\MN, M(x))$, but Theorem \ref{categoric theorem} with $F(x,t) = \int_{1}^{x}u^{-\I t}\dif M(u)$ shows that in the topological sense, the generic sequence $\MN \in X$ does not satisfy $\LH(\MN, M(x))$.

Similar observations can be made when we set for example $a_{j} = j - Kj^{1/2}$, $b_{j} = j + Kj^{1/2}$ for some $K > 0$ and compare the probabilistic Theorem \ref{probabilistic theorem squareroot} with the Baire-categoric Theorem \ref{categoric theorem} with $F(x,t) = x^{1-\I t}/(1-\I t)$.

\section{The Lindel\"of hypothesis for sequences of Beurling primes}
\label{sec: Beurling}

A Beurling generalized number system $(\MP, \MN)$ consists of a sequence of generalized primes $\MP = (p_{j})_{j\ge1}$, where
\[
	p_{j}\in \R, \quad 1< p_{1} \le p_{2} \le \dotso, \quad p_{j}\to \infty,
\]
and the corresponding sequence of generalized integers $\MN = (n_{j})_{j\ge1}$, which consists of $1=n_{1}$ and all possible (finite) products of generalized primes, ordered in a non-decreasing fashion. Both $\MP$ and $\MN$ are allowed to have repeated values. For example, if $p_{1}=p_{2}=2$, $p_{3}>2$, then the integer $4$ occurs $3$ times in $\MN$, corresponding to $p_{1}^{2}$, $p_{1}p_{2}$, and $p_{2}^{2}$. We denote the counting functions of the sequences by $\pi_{\MP}(x) = \sum_{p_{j}\le x}1$ and $N_{\MP}(x) = \sum_{n_{j}\le x}1$ respectively, and omit the subscript $\MP$ if there is no risk of confusion. We also consider the Chebyshev function, defined as $\psi_{\MP}(x) = \sum_{p_{j}^{\nu}\le x}\log p_{j}$.

Such generalized number systems were introduced by Beurling \cite{Beurling}, who wanted to investigate the ``stability'' of theorems from multiplicative number theory. For example, he showed that the prime number theorem (PNT) $\pi(x) \sim x/\log x$ follows from the estimate $N(x) = Ax + O(x/\log^{\gamma}x)$ for some $A>0$, provided that $\gamma>3/2$. On the other hand, the Riemann hypothesis need not hold in this general setting, as Diamond, Montgomery, and Vorhauer \cite{DiamondMontgomeryVorhauer} (see also \cite{Zhang}) established the existence of generalized number systems satisfying $N(x)=Ax + O_{\eps}(x^{1/2+\eps})$ for every $\eps>0$, but for which the classical de la Vall\'ee-Poussin estimate is best possible: $\pi(x) = \Li(x)  +\Omega\bigl(x\exp(-c\sqrt{\log x})\bigr)$ for some $c>0$.

Given a generalized number system $(\MP, \MN)$, we associate with it the Beurling zeta function $\zeta_{\MP}(s) \coloneqq \sum_{n_{j}}n_{j}^{-s}$. The multiplicative structure of the Beurling integers is encoded in the Euler product
\[
	\zeta_{\MP}(s) = \prod_{p_{j}}\frac{1}{1-p_{j}^{-s}},
\]
from which it follows that 
\[
	-\frac{\zeta_{\MP}'(s)}{\zeta_{\MP}(s)} = \int_{1}^{\infty}x^{-s}\dif \psi_{\MP}(x).
\]
For a general introduction to the theory of Beurling number systems, we refer to \cite{DiamondZhangbook}.

In \cite{GGL}, Gonek, Graham, and Lee showed Theorem \ref{GGL theorem}, stating that the Riemann hypothesis is equivalent to $\LH(\mathbb{P}, \Li(x))$. Here we extend this result to the context of Beurling generalized numbers. We consider number systems $(\MP, \MN)$ for which $N_{\MP}(x) = Ax + O(x^{\theta})$ for some $A>0$ and $\theta \in (0,1)$. In particular they satisfy the PNT.
We recall that we use $s=\sigma+\I\tau$ to denote a complex variable, while $t$ will denote the variable occurring in the exponent in the exponential sums related to $\LH$. 
We first define for $x\ge1$ and real $t$
\begin{align*}
	\pi(x,t) 	&= \sum_{p_{j}\le x}p_{j}^{-\I t},&				r(x,t) 		&= \pi(x,t) - \int_{p_{1}}^{x}\frac{u^{-\I t}}{\log u}\dif u;\\ 
	\psi(x,t) 	&= \sum_{n_{j}\le x}\Lambda(n_{j})n_{j}^{-\I t},&		R(x,t) 	&= \psi(x,t) - \frac{x^{1-\I t}}{1-\I t}.
\end{align*}
For $0< x < 1$ we set $r(x,t)=R(x,t) = 0$.
Then $\LH(\MP, \Li(x))$ is equivalent to the statement that for every $\eps>0$ and $B>0$,
\begin{equation}
\label{eq: LH(P) with Lambda}
	R(x,t) \ll_{\eps, B} x^{1/2}\abs{t}^{\eps}, \quad \text{for } 1 \le x \le \abs{t}^{B}.
\end{equation}
Indeed, writing
\[
	\psi(x,t) = \int_{1}^{x}\log u\dif \pi(u, t) + O(x^{1/2}), \quad \pi(x,t) = \int_{1}^{x}\frac{\dif\psi(u,t)}{\log u} + O\Bigl(\frac{x^{1/2}}{\log x}\Bigr),
\]
integrating by parts, and using that $\log x \ll_{\eps, B} \abs{t}^{\eps}$ and the fact that $\pi(x) \ll x/\log x$, it is clear that $r(x, t) \ll \Li(x)^{1/2}\abs{t}^{\eps}$ is equivalent to $R(x,t) \ll x^{1/2}\abs{t}^{\eps}$.

\bigskip

Before giving the proof of Theorem \ref{th: LH(P) <=> RH}, we need a couple of preparatory lemmas. First we state a well-known effective Perron inversion formula.

Let $(n_{j})_{j\ge1}$ be a non-decreasing unbounded sequence with $n_{1}\ge1$ and $(a_{j})_{j\ge1}$ be an arbitrary complex sequence. Write $A(x) = \sum_{n_{j}\le x}a_{j}$, and suppose $F(s) = \sum_{j\ge1}a_{j}n_{j}^{-s}$ converges absolutely for $\sigma > \sigma_{a}$. Then for $\kappa > \max(0,\sigma_{a})$, $T\ge1$, and $x\ge1$,  
\begin{equation}
\label{effective Perron}
	A(x) = \frac{1}{2\pi\I}\int_{\kappa-\I T}^{\kappa+\I T}F(s)x^{s}\frac{\dif s}{s} + O\biggl(x^{\kappa}\sum_{j\ge1}\frac{\abs[0]{a_{j}}}{n_{j}^{\kappa}\bigl(1+T\abs[0]{\log(x/n_{j})}\bigr)}\biggr).
\end{equation}
See e.g.\ \cite[Theorem II.2.3]{Tenenbaum} for the case $n_{j} = j$ (the proof of the general case is identical).

\begin{lemma}
\label{lem: convexity bounds}
Suppose $N(x) = Ax + O(x^{1/2})$ for some $A>0$. Then $\zeta(s)$ has analytic continuation to the half-plane $\Re s > 1/2$, except for a simple pole at $s=1$ with residue $A$. 
Furthermore $\zeta(s)$ satisfies the following bounds (where $s=\sigma+\I \tau$):
\begin{align*}
	\zeta(s) 	&\ll \Bigl(\frac{\abs{\tau}+1}{\sigma-1/2}\Bigr)^{2-2\sigma}, 	&&\text{for } \frac{1}{2} < \sigma < \frac{3}{4}, \\
	\zeta(s)	&\ll \abs{\tau}^{2-2\sigma}\log\abs{\tau}+1,				&&\text{for } \sigma \ge \frac{3}{4}, \quad \abs{\tau} \ge 2.
\end{align*}
\end{lemma}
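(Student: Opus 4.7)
The plan is to derive both the analytic continuation and the two convexity bounds from a single truncated Mellin-type representation of $\zeta(s)$, obtained by splitting the natural integral for $\zeta(s)$ at a cutoff $x\ge 1$ that we will later optimize.

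Starting from $\zeta(s)=s\int_{1}^{\infty}N(u)u^{-s-1}\dif u$ (valid for $\sigma>1$) and substituting $N(u)=Au+E(u)$ with $E(u)\ll u^{1/2}$, one obtains
\[
	\zeta(s) \;=\; \frac{As}{s-1} + s\int_{1}^{\infty}E(u)u^{-s-1}\dif u;
\]
since the remaining integral converges absolutely for $\sigma>1/2$, this provides the claimed meromorphic continuation with a simple pole at $s=1$ of residue $A$. Splitting the same manipulation at $x$ (equivalently, applying Abel summation to the tail $\sum_{n_{j}>x}n_{j}^{-s}$) yields the master identity
\[
	\zeta(s) \;=\; \sum_{n_{j}\le x}n_{j}^{-s} + \frac{Ax^{1-s}}{s-1} - E(x)x^{-s} + s\int_{x}^{\infty}E(u)u^{-s-1}\dif u,
\]
valid for $\sigma>1/2$, $s\ne 1$, and any $x\ge 1$. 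The last three terms are bounded in modulus by $x^{1-\sigma}/|s-1|$, $x^{1/2-\sigma}$, and $|s|x^{1/2-\sigma}/(\sigma-1/2)$ respectively, while the partial sum is controlled via Abel summation and $N(u)\ll u$ by $\sum_{n_{j}\le x}n_{j}^{-\sigma}$, which is $\ll x^{1-\sigma}/(1-\sigma)$ for $\sigma<1$ and $\ll x^{1-\sigma}\log(x+1)+1$ in a neighbourhood of $\sigma=1$.

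In the strip $1/2<\sigma<3/4$ we have $1-\sigma>1/4$ and $|s-1|\gg 1+|\tau|$, so balancing the two surviving terms $x^{1-\sigma}$ and $(1+|\tau|)x^{1/2-\sigma}/(\sigma-1/2)$ by the choice $x=((1+|\tau|)/(\sigma-1/2))^{2}\ge 1$ produces the first bound. For the second bound ($\sigma\ge 3/4$, $|\tau|\ge 2$), the choice $x=|\tau|^{2}$ makes all three correction terms $\ll |\tau|^{2-2\sigma}$, and the partial-sum bound $x^{1-\sigma}\log(x+1)$ contributes exactly the required $|\tau|^{2-2\sigma}\log|\tau|$, matching the main term of the claim; for $\sigma$ bounded away from $1$ one has $|\zeta(s)|\le\zeta(\sigma)=O(1)$, which is absorbed by the ``$+1$''. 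The main (minor) delicacy is uniform control of the partial sum as $\sigma$ crosses $1$, where $\int_{1}^{x}u^{-\sigma}\dif u$ transitions from $x^{1-\sigma}/(1-\sigma)$ to $\log x$; otherwise the argument is routine Abel summation and one-parameter optimization.
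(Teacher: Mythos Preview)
Your proof is correct and follows essentially the same approach as the paper: derive the truncated representation
\[
\zeta(s)=\sum_{n_j\le x}n_j^{-s}+\frac{Ax^{1-s}}{s-1}-E(x)x^{-s}+s\int_x^\infty E(u)u^{-s-1}\dif u,
\]
bound each term, and optimize in $x$. The only cosmetic differences are that you use the cruder estimate $N(u)\ll u$ (rather than $N(u)=Au+O(u^{1/2})$) to bound the partial sum, and you take $x=|\tau|^2$ in the second range whereas the paper takes $x=|\tau|^2\log|\tau|$; both choices yield the stated bounds.
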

\begin{proof} We write \( N(x) = Ax + E(x) \) with \( E(x) \ll x^{1/2} \). Then we have for $\sigma > 1$ and $x > 1$
\begin{align*}
	\zeta(s) &= \sum_{n_j \leq x} n_j^{-s} + \int_x^\infty \frac{\dif N(u)}{u^s} 
	= \sum_{n_j \leq x} n_j^{-s} + A \int_x^\infty  \frac{\dif u}{u^s} + \int_x^\infty \frac{\dif E(u)}{u^s}   \\
	&= \sum_{n_j \leq x} n_j^{-s} + A\frac{x^{1-s}}{s-1} - \frac{E(x)}{x^s} + s \int_x^\infty \frac{E(u)}{u^{s+1}} \dif u.
\end{align*}
As $E(x) \ll x^{1/2}$ this has, for fixed $x$, analytic continuation to $\sigma > 1/2$, except for a simple pole at $s=1$ with residue $A$. Consequently, for $\sigma > 1/2$ we have
\[
	\abs{\zeta(s) - \frac{A}{s-1}} \ll \sum_{n_{j}\le x}n_{j}^{-\sigma} + \abs{\frac{x^{1-s}-1}{1-s}} + \abs{s}\frac{x^{1/2-\sigma}}{\sigma-1/2}.
\]
Using again $N(x) = Ax + O(x^{1/2})$ we obtain for $\sigma>1/2$
\[
	\sum_{n_{j}\le x}n_{j}^{-\sigma} = A\frac{x^{1-\sigma}-1}{1-\sigma} + O\Bigl(\frac{\sigma}{\sigma-1/2}\Bigr),
\]
where we take the first term of the right hand side to be $A\log x$ if $\sigma=1$.

If $1/2<\sigma<3/4$ we take $x = \bigl(\max\{1,|\tau|\}/(1/2-\sigma)\bigr)^{2}$, yielding 
\[
	\abs{\zeta(s) - \frac{A}{s-1}} \ll \Bigl(\frac{\abs{\tau}+1}{\sigma-1/2}\Bigr)^{2-2\sigma}.
\]
If $\sigma\ge3/4$ and $\abs{\tau}\ge2$ we take $x = \abs{\tau}^{2}\log\abs{\tau}$, yielding
\[
	\zeta(s)	\ll \abs{\tau}^{2-2\sigma}\log\abs{\tau}+1.
\]
\end{proof}

\begin{lemma}
\label{lem: bounds zeta'/zeta}
Suppose $N(x) = Ax + O(x^{1/2})$ for some $A>0$, and that $\zeta(s)$ has no zeros in $\sigma > 1/2$. Then $-\zeta'(s)/\zeta(s)$ has meromorphic continuation to $\sigma > 1/2$, with a simple pole at $s=1$ with residue $1$. For each $\delta>0$, we have
\[
	-\frac{\zeta'(s)}{\zeta(s)} \ll \frac{1}{\delta^{2}}\Bigl(\log(|\tau|+2) + \log\frac{1}{\delta}\Bigr), \quad \text{for } \sigma\ge \frac{1}{2}+\delta \text{ and } \abs{s-1} \ge \frac{1}{4}.
\]
\end{lemma}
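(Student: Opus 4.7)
The plan is to work with the auxiliary function $g(s) \coloneqq (s-1)\zeta(s)/A$. By Lemma \ref{lem: convexity bounds} this is holomorphic on $\sigma > 1/2$, and since the hypothesis rules out zeros of $\zeta$ in that half-plane while $g(1) = 1$, the function $g$ is also nonvanishing there. From the identity $\zeta'(s)/\zeta(s) = g'(s)/g(s) - 1/(s-1)$ I immediately read off both the claimed meromorphic continuation and the simple pole of $-\zeta'/\zeta$ at $s=1$ with residue $+1$. The task is thereby reduced to bounding $g'/g$.

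Fix $s_0 = \sigma_0 + \I \tau$ with $\sigma_0 \ge 1/2 + \delta$ and $\abs{s_0 - 1}\ge 1/4$; the estimate is trivial once $\sigma_0 \ge 2$ by absolute convergence, so I may assume $\sigma_0 < 2$. I will work on the closed disc $\bar D_R$ of radius $R = 3/2 - \delta/4$ centered at $s_1 \coloneqq 2 + \I\tau$, which sits inside $\{\sigma \ge 1/2 + \delta/4\}$ and on which $\log g$ admits a single-valued holomorphic branch (normalized via the absolutely convergent series $\log \zeta(s) = \sum_{j,k\ge 1}(kp_{j}^{ks})^{-1}$, valid for $\sigma > 1$). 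The first step is to obtain a uniform estimate of the shape
\[
	\abs{g(s)} \ll \frac{(\abs{\tau}+2)^{5/2}\log(\abs{\tau}+2)}{\delta^{3/2}} \quad \text{on } \bar D_R,
\]
by patching Lemma \ref{lem: convexity bounds} on the bulk of $\bar D_R$ together with the Laurent expansion $\zeta(s) = A/(s-1) + O(1)$ near $s=1$. This yields $\Re\log g(s) = \log\abs{g(s)} \ll \log(\abs{\tau}+2) + \log(1/\delta)$ on $\bar D_R$, while at the center the same absolutely convergent series and the trivial bound on $\log(1+\I\tau)$ give $\abs{\log g(s_1)} \ll \log(\abs{\tau}+2)$.

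With these two ingredients I apply the Borel--Carath\'eodory inequality to the concentric pair $(\bar D_R, \bar D_r)$ with $r = R - \delta/4$; since both ratios $2r/(R-r)$ and $(R+r)/(R-r)$ are $O(1/\delta)$, this produces
\[
	\sup_{s\in\bar D_r}\abs{\log g(s)} \ll \frac{1}{\delta}\Bigl(\log(\abs{\tau}+2) + \log \tfrac{1}{\delta}\Bigr).
\]
Because $\abs{s_0 - s_1} = 2 - \sigma_0 \le 3/2 - \delta$, the point $s_0$ lies in $\bar D_r$ with clearance at least $\delta/2$ from $\partial D_r$, so Cauchy's estimate on the circle of radius $\delta/2$ around $s_0$ delivers the desired bound for $\abs{g'(s_0)/g(s_0)}$; the contribution $\abs{1/(s_0 - 1)} \le 4$ from the pole is absorbed into the same estimate since $\abs{s_0 - 1} \ge 1/4$. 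I expect the only real technical point to be the uniform estimate on $\abs{g}$ over $\bar D_R$: Lemma \ref{lem: convexity bounds} gives no information at $s=1$ itself and its first bound degenerates as $\sigma \to 1/2$, so one has to join the Laurent expansion near the pole to the convexity estimate $((\abs{\tau}+1)/\delta)^{3/2}$ in the leftmost slab. It is precisely the $\delta^{-3/2}$ from that convexity bound which, after Borel--Carath\'eodory and Cauchy, produces the factor $\delta^{-2}$ in the final statement.
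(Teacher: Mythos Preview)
Your argument is correct and matches the paper's proof essentially line for line: both pass to a zero-free auxiliary function (the paper uses $\zeta(s)(s-1)/s$ where you use $(s-1)\zeta(s)/A$), bound its log-modulus via Lemma~\ref{lem: convexity bounds}, apply Borel--Carath\'eodory on the same concentric discs of radii $3/2-\delta/4$ and $3/2-\delta/2$ centered at $2+\I\tau$, and finish with Cauchy's estimate on a radius-$\delta/2$ circle. The only cosmetic difference is the choice of normalizing factor in the auxiliary function.
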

\begin{proof}
As $\zeta(s)$ has no zeros for $\sigma > 1/2$, $\log(\zeta(s)\frac{s-1}{s})$ defines an analytic function in this half-plane. From Lemma \ref{lem: convexity bounds} it easily follows that
\begin{equation}\label{eq: bound log zeta}
	\log \abs{\zeta(s)\frac{s-1}{s}} \le \log (|\tau| + 2)+ \log\frac{1}{\delta} + O(1), \quad \text{for } \sigma \geq \frac{1}{2} + \frac{\delta}{4}.
\end{equation}
Let $z=2+\I \tau$. Applying the Borel--Carath\'eodory lemma 
to the two circles with centre $z$ and radii $R = 3/2 - \delta/4$  and $r = 3/2 - \delta/2$ yields 
\[
	\max_{\abs{s - z} = r} \abs{\log \Bigl(\zeta(s)\frac{s-1}{s}\Bigr)} \le \frac{2r}{R-r} \max_{\abs{s - z} = R} \log \abs{\zeta(s)\frac{s-1}{s}} + \frac{R+r}{R-r} \abs{\log \Bigl(\zeta(z)\frac{z-1}{z}\Bigr)}.
\]
Using \eqref{eq: bound log zeta} it follows that
\[
	\log \Bigl(\zeta(s)\frac{s-1}{s}\Bigr) \ll \frac{1}{\delta}\Bigl(\log(|\tau|+2) + \log\frac{1}{\delta}\Bigr), \quad \text{for } \sigma \geq \frac{1}{2} + \frac{\delta}{2}.
\]
By Cauchy's formula we obtain
\[
	\frac{\zeta'(s)}{\zeta(s)} = \frac{1}{2\pi \I} \oint \frac{\log(\zeta(z)\frac{z-1}{z})}{(z-s)^2} \dif z - \frac{1}{s-1} + \frac{1}{s} \ll \frac{1}{\delta^{2}}\Bigl(\log(|\tau|+2) + \log\frac{1}{\delta}\Bigr),
\]
 for $\sigma \geq 1/2 + \delta$ and $\abs{s-1} \geq 1/4$ say, and where the integral is over a circle with center $s$ and radius $\delta/2$.
\end{proof}

Recall that we have said that the Riemann hypothesis holds for $(\MP, \MN)$ if $\psi(x)=x+O_{\eps}(x^{1/2+\eps})$ for all $\eps>0$. Using the above lemma and Perron inversion, it now easily follows that $N(x) = Ax+O(x^{1/2})$ and the non-vanishing of $\zeta(s)$ for $\Re s >1/2$ imply RH (indeed, one can even get $\psi(x) = x + O(x^{1/2}(\log x)^{4})$). Without assuming $N(x)=Ax+O(x^{1/2})$, merely the analyticity of $-\zeta'(s)/\zeta(s)$ for $\Re s > 1/2$ does not imply RH, as $-\zeta'(s)/\zeta(s)$ may have extreme growth.

On the other hand, RH implies of course that $\zeta(s)$ has analytic continuation to $\Re s > 1/2$ with the exception of a simple pole at $s=1$, and that it does not vanish there.

\bigskip
We are now ready to give the proof of Theorem \ref{th: LH(P) <=> RH}. 
Suppose first that the Riemann hypothesis holds. 
Let $t$ be real with $|t| \geq 1$ and let $m$ be a positive integer with $m > \e^4$. Since $N(x) = Ax + O(x^{1/2})$ by assumption there are $N(m+1) - N(m) = O(m^{1/2})$ Beurling integers in the interval $(m, m+1]$. Hence we can choose $x \in (m, m+1]$ such that $|x - n_j| \gg x^{-1/2}$ for all $n_j$. Furthermore let $\kappa = 1 + 1/\log x$ and $T \geq |t| + 2$. Then applying the effective Perron inversion formula \eqref{effective Perron} with $a_{j} = \Lambda(n_{j})n_{j}^{-\I t}$ yields 
\begin{align*}
		\psi(x, t) = \sum_{n_{j}\le x}\Lambda(n_{j})n_{j}^{-\I t} 
		&= -	\frac{1}{2\pi \I } \int_{\kappa - \I T}^{\kappa + \I T} \frac{\zeta'(s+ \I t)}{\zeta(s + \I t)} x^{s}\frac{\dif s}{s} \\
		&+ O\Big( -\frac{\zeta'(\kappa)}{\zeta(\kappa)}\frac{x}{T} \Big) 
		+ O\Big( \frac{x \log x}{T} \sum_{x/2 < n_j \leq 2x} \frac{1}{\abs{x - n_j}} \Big).
\end{align*} 
Using $-\zeta'(\sigma)/\zeta(\sigma) \ll 1/(\sigma-1)$ for $\sigma\to1^{+}$ and $|x - n_j| \gg x^{-1/2}$ we obtain 
\begin{equation}\label{eq: integral psi}
	\psi(x, t) = - \frac{1}{2\pi \I} \int_{\kappa - \I T}^{\kappa + \I T} \frac{\zeta'(s + \I t)}{\zeta(s + \I t)} x^{s}\frac{\dif s}{s} + O \Big( \frac{x^{5/2} \log x}{T} \Big).
\end{equation}
In order to evaluate the integral on the right, we integrate along the boundary of the rectangle with vertices $\kappa-\I T, \kappa+\I T, d+\I T, d-\I T$, where $d = 1/2 + 1/\log x < 3/4$ by the assumption on $m$. 
The function $\zeta'(s)/\zeta(s)$ is meromorphic in the right half-plane $\sigma > 1/2$ with a simple pole at $s = 1$ of residue $-1$. Since the Riemann hypothesis holds by assumption, there are in fact no further poles in this right half-plane. Hence
\begin{equation*}
	\frac{1}{2\pi \I } \int_{\kappa - \I T}^{\kappa + \I T} \frac{\zeta'(s + \I t)}{\zeta(s + \I t)} x^{s}\frac{\dif s}{s} 
	= - \frac{x^{1-\I t}}{1 - \I t} + \frac{1}{2\pi \I } \Big(\int_{\kappa - \I T}^{d - \I T} +  \int_{d - \I T}^{d + \I T} + \int_{d + \I T}^{\kappa + \I T} \Big) \frac{\zeta'(s + \I t)}{\zeta(s + \I t)} x^{s}\frac{\dif s}{s}.
\end{equation*}
Applying Lemma \ref{lem: bounds zeta'/zeta} gives
\begin{align*}
	\int_{d \pm \I T}^{\kappa \pm \I T} \frac{\zeta'(s + \I t)}{\zeta(s + \I t)} x^s\frac{\dif s}{s} 	
		&\ll (\log T)(\log x)^3 \int_d^\kappa \frac{x^\sigma}{|\sigma + \I T|} \dif \sigma \ll \frac{x (\log T)(\log x)^2}{T}, \\
	\int_{d-iT}^{d+iT} \frac{\zeta'(s + \I t)}{\zeta(s + \I t)}x^s\frac{\dif s}{s} 				
		&\ll x^{d}(\log T)(\log x)^3 \int_{-T}^{T} \frac{\dif \tau}{d + \I \tau} \ll x^{1/2}(\log T)^{2}(\log x)^{3}, 
\end{align*}
where we have used $T \geq |t| + 2$. 
Collecting all estimates yields
\[
	\psi(x, t) = \frac{x^{1-\I t}}{1 - \I t} + O\Big( \frac{x^{5/2} \log x}{T}  + x^{1/2}(\log T)^{2}(\log x)^{3}\Big).
\]
Let now $B > 0$ be arbitrary and suppose $\e^4 + 1 \leq x \leq |t|^B$. We then choose $ T = \max(|t|^{2B}, |t| + 2) \geq x^2$. Hence it follows that
\begin{equation}\label{eq: psi end}
	\psi(x, t) = \sum_{n_{j}\le x}\Lambda(n_{j})n_{j}^{-\I t} = \frac{x^{1- \I t}}{1 - \I t} + O_{\eps, B}(x^{1/2}|t|^\eps).
\end{equation}
In the beginning of the proof we restricted $x$ to be a specific number in the interval $(m, m+1]$. Now if we replace $x$ by an arbitary $\tilde{x} \in (m, m+1]$, the left-hand side of \eqref{eq: psi end} changes by at most $O(x^{1/2+\eps}) = O(x^{1/2}|t|^{\eps})$, while the main term of the right-hand side changes by at most $O(1)$. Hence \eqref{eq: psi end} holds for all $ \e^4 + 1 \leq x \leq |t|^B $ and also trivially for $1 \leq x < \e^4 + 1$. This proves $\LH(\MP, \Li(x))$.


\bigskip

Suppose now that $\LH(\MP, \Li(x))$ holds. One might think that one can show that $\zeta(s)$ does not have zeros in the half-plane $\Re s > 1/2$ based on some kind of explicit formula
\[
	R(x,t) = -\sum_{\rho}\frac{x^{\rho-\I t}}{\rho-\I t} + \text{error},
\]
where the sum is over zeros $\rho=\beta+\I \gamma$ of $\zeta(s)$, limited to certain suitable ranges for $\beta$ and $\gamma$. (Actually, such an explicit formula for $\psi(x)$, or equivalently, $R(x,0)$, was recently shown in the Beurling setting by R\'ev\'esz \cite[Theorem 5.1]{Revesz2021}\footnote{A term $x/t_{k}$ is missing in the error term in the statement of \cite[Theorem 5.1]{Revesz2021}}.) A zero $\rho=\beta+\I\gamma$ contributes to a term of size $x^{\beta}$ in the sum (if the term $\I t$ is sufficiently close to $\rho$ say). Hence the existence of a zero $\rho_{0} = \beta_{0} + \I\gamma_{0}$ with $\beta_{0}> 1/2$ would contradict $\LH(\MP, \Li(x))$. The problem is that it is very difficult to establish that there is no too severe cancellation in the sum caused by other zeros, even already in the case of the Riemann zeta function and with $t=0$ (see e.g.\ \cite{Pintz1980}).

To show that $\zeta(s)$ has no zeros in $\Re s>1/2$ we instead follow a more subtle approach of Gonek, Graham, and Lee, based on a method by Pintz (see \cite{Pintz1982}). The idea is to consider a suitable average of $R$ which suppresses the influence of all but one zero (namely the zero $\rho_{0}$ with $\beta_{0}>1/2$).

We start by setting
\[
	H_{t}(s) \coloneqq \int_{1^{-}}^{\infty}x^{-s}\dif R(x,t) = -\frac{\zeta'(s+\I t)}{\zeta(s+\I t)} - \frac{s}{(1-\I t)(s+\I t-1)}.
\]
for $t\in \R$. This function has meromorphic continuation to $\sigma >1/2$, with poles at the points $s=\rho-\I t$ for each zero $\rho=\beta+\I \gamma$ of $\zeta(s)$ with $\beta>1/2$. Suppose now that $\rho_{0}=\beta_{0}+\I\gamma_{0}$ with $\beta_{0}>1/2$ is such a zero, and let $m$ be its multiplicity. We then consider
\[
	h_{t}(s) \coloneqq \frac{\zeta(s+\I t)(s+\I t-1)}{(s+\I t-\rho_{0})^{m}(s+\I t)^{3}},
\]
which defines a function analytic in $\sigma > 1/2$.  Denote its inverse Mellin transform by $W_{t}(y)$, that is
\[
	W_{t}(y) \coloneqq \frac{1}{2\pi\I}\int_{2-\I\infty}^{2+\I\infty}h_{t}(s)y^{s}\frac{\dif s}{s}, \quad y > 0.
\]
Since $\zeta(s)$ is bounded on the half-plane $\sigma \ge2$, the integral converges absolutely and locally uniformly in $y$, so that $W_{t}(y)$ is a continuous function. It is furthermore supported on $[1, \infty)$, as can be seen by shifting the contour to the right when $0<y\le 1$. 
The function $W_{t}$ is the weight by which we shall average $R$. Indeed, we set
\[
	I_{t}(x) \coloneqq \int_{1^{-}}^{x}\dif R(\cdot, t)\ast \dif W_{t} = \int_{1^{-}}^{x}R\Bigl(\frac{x}{y},t\Bigr)\dif W_{t}(y) = \int_{1^{-}}^{x}W_{t}\Bigl(\frac{x}{y}\Bigr)\dif R(y,t).
\]
Here, $\ast$ denotes the multiplicative convolution of measures, a notion extending Dirichlet convolution (see e.g.\ \cite[Chapter 2]{DiamondZhangbook}). Now $W_{t}(y)$ is differentiable, with
\begin{equation}
\label{eq: Perron W'}
	W_{t}'(y) = \frac{1}{2\pi\I}\int_{2-\I\infty}^{2+\I\infty}h_{t}(s)y^{s-1}\dif s.	
\end{equation}
Hence we have
\begin{equation}
\label{eq: I with W'}
	I_{t}(x) = \int_{1}^{x}R\Bigl(\frac{x}{y},t\Bigr)W_{t}'(y)\dif y.
\end{equation}

First we bound $I_{t}(x)$ by employing the bounds provided by $\LH(\MP, \Li(x))$ and by bounding $W_{t}'(y)$. Let $x$ be sufficiently large so that $1/\log x < \frac{\beta_{0}-1/2}{2}$. In the Perron integral \eqref{eq: Perron W'} for $W_{t}'$, we shift the contour of integration to the line $\sigma = 1/2 + 1/\log x$. The convexity bound for $\zeta(s)$ provided by Lemma \ref{lem: convexity bounds} yields
\[
	\zeta\Bigl(\frac{1}{2} + \frac{1}{\log x} + \I \tau\Bigr) \ll (|\tau|+1)\log x,
\]	
hence we get
\[
	W_{t}'(y) \ll y^{-1/2+\frac{1}{\log x}}\int_{-\infty}^{\infty}\frac{\log x}{|1/2+1/\log x + \I \tau - \rho_{0}|^{m}(|\tau|+1)}\dif\tau \ll_{\rho_{0}}y^{-1/2+\frac{1}{\log x}}\log x.
\]
Substituting this bound and the Lindel\"of hypothesis for $\MP$ in the form \eqref{eq: LH(P) with Lambda} into \eqref{eq: I with W'} yields
\begin{equation}
\label{eq: bound I}
	I_{t}(x) \ll_{\eps, B, \rho_{0}} \int_{1}^{x}(x/y)^{1/2}\abs{t}^{\eps}y^{-1/2}\log x\dif y = x^{1/2}(\log x)^{2}\abs{t}^{\eps},
\end{equation}
provided that $1\le x\le \abs{t}^{B}$ and $1/\log x < \frac{\beta_{0}-1/2}{2}$.
	
On the other hand, 
\[
	I_{t}(x) = \int_{1^{-}}^{\infty}W_{t}\Bigl(\frac{x}{y}\Bigr)\dif R(y, t) = \frac{1}{2\pi\I}\int_{2-\I\infty}^{2+\I\infty}h_{t}(s)H_{t}(s)x^{s}\frac{\dif s}{s},
\]
by inverting the order of integration by Fubini's theorem (use that $\abs[0]{\dif R(y,t)} \le \dif\psi(y) + \dif y$). We have
\begin{align*}
	h_{t}(s)H_{t}(s) 	&= \frac{\zeta(s+\I t)(s+\I t-1)}{(s+\I t-\rho_{0})^{m}(s+\I t)^{3}}\biggl(-\frac{\zeta'(s+\I t)}{\zeta(s+\I t)} - \frac{s}{(1-\I t)(s+\I t -1)}\biggr) \\
				&= -\frac{\zeta'(s+\I t)(s+\I t-1)}{(s+\I t-\rho_{0})^{m}(s+\I t)^{3}} - \frac{\zeta(s+\I t)s}{(1-\I t)(s+\I t-\rho_{0})^{m}(s+\I t)^{3}}.
\end{align*}
This function is analytic for $\sigma > 1/2$ except for a simple pole at $s=\rho_{0}-\I t$ with residue 
\[
	-\frac{\zeta^{(m)}(\rho_{0})(\rho_{0}-1)}{(m-1)!\rho_{0}^{3}}.
\]
Shifting the contour of integration again to $\sigma = 1/2 + 1/\log x$, we get
\begin{equation}
\label{eq: evaluation I}
	I_{t}(x) = -\frac{\zeta^{(m)}(\rho_{0})(\rho_{0}-1)}{(m-1)!\rho_{0}^{3}} \cdot \frac{x^{\rho_{0}-\I t}}{\rho_{0}-\I t} + O_{\rho_{0}}\bigl(x^{1/2}(\log x)^{2}\bigr).
\end{equation}
Here we used Cauchy's formula and Lemma \ref{lem: convexity bounds} giving 
\[
	\zeta'\Bigl(\frac{1}{2} + \frac{1}{\log x} + \I \tau\Bigr) = \frac{1}{2\pi\I}\oint\frac{\zeta(z)}{\bigl(z-(1/2+1/\log x+\I \tau)\bigr)^{2}}\dif z \ll (|\tau|+1)(\log x)^{2},
\]
where the integral is over a circle with centre $1/2+1/\log x+\I \tau$ and radius $1/(2\log x)$ say. Combining \eqref{eq: bound I} and \eqref{eq: evaluation I} gives
\[
	\abs[3]{\frac{x^{\rho_{0}-\I t}}{\rho_{0}-\I t}} \ll_{\eps, B, \rho_{0}} x^{1/2}(\log x)^{2}\abs{t}^{\eps}, \quad \text{for } 1\le x\le \abs{t}^{B}, \quad \frac{1}{\log x} < \frac{\beta_{0}-1/2}{2}.
\]
Letting $t= x^{1/B}$ we get
\[
	x^{\beta_{0}-1/2} \ll (\log x)^{2}x^{\frac{1+\eps}{B}}, \quad x\to \infty
\]
which leads to a contradiction upon selecting $B > \frac{1+\eps}{\beta_{0}-1/2}$.

In conclusion, $\LH(\MP,\Li(x))$ implies that $\zeta(s)$ has no zeros in $\sigma > 1/2$.
\bigskip

In Theorem \ref{th: LH(P) <=> RH} we made the assumption that $N(x) = Ax + O(x^{1/2})$. Besides yielding the analytic continuation of $\zeta(s)$ to $\Re s >1/2$, this assumption also provides polynomial bounds on the zeta function (see Lemma \ref{lem: convexity bounds}), which are crucial in the proof of Theorem \ref{th: LH(P) <=> RH}. In the absence of such bounds, $\LH(\MP, \Li(x))$ may fail, even when $\zeta(s)$ has no zeros in $\Re s > 1/2$, as the next propositions shows.

\begin{proposition}
\label{counterexample weakening square root error term}
For every $\beta \in (1/2,1)$ there exists a Beurling number system $(\MP, \MN)$ for which $N(x) = Ax + O(x^{\beta})$ and $\psi(x) = x + O(\log x\log\log x)$, yet for which $\LH(\MP, \Li(x))$ is false.
\end{proposition}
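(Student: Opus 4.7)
The proof is by explicit construction. My strategy is to first design a continuous ``template'' $\Pi^{*}$ on $(1,\infty)$ whose associated formal zeta function $\zeta^{*}(s)$ satisfies: $\zeta^{*}$ is analytic and non-vanishing in $\Re s>1/2$ apart from a simple pole at $s=1$; $-\zeta^{*\prime}/\zeta^{*}(s)-1/(s-1)$ extends analytically to $\Re s>0$ and is polynomially bounded there (so that Mellin inversion yields $\psi^{*}(x)=x+O(\log x\log\log x)$); the integer measure $N^{*}(x)$ generated from $\Pi^{*}$ by multiplicative convolution satisfies $N^{*}(x)=Ax+O(x^{\beta})$; and crucially, $\zeta^{*}$ itself grows rapidly toward the critical line (essentially exponentially in $|\tau|$). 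This last property is compatible with the second because $\log\zeta^{*}$ is essentially an antiderivative of $-\zeta^{*\prime}/\zeta^{*}$, so polynomial growth of the latter amplifies into nearly exponential growth of the former. This rapid growth is what encodes a spectral resonance in the generalized primes.

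Next I would discretize $\Pi^{*}$ into an actual Beurling prime sequence $\MP=(p_{j})_{j\ge1}$ using the techniques of Diamond--Montgomery--Vorhauer \cite{DiamondMontgomeryVorhauer} and Zhang \cite{Zhang} (see also \cite{DiamondZhangbook}), ensuring $\pi_{\MP}(x)=\Pi^{*}(x)+O(1)$. Standard estimates then transfer the asymptotic properties of the template to the actual system: $N_{\MP}(x)=Ax+O(x^{\beta})$ and $\psi_{\MP}(x)=x+O(\log x\log\log x)$, with the analytic structure of $\zeta_{\MP}$ inheriting the resonance of $\zeta^{*}$ up to an entire factor of controlled growth.

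The final step is to show $\LH(\MP,\Li(x))$ fails by producing pairs $(x_{k},t_{k})$ with $x_{k}\le|t_{k}|^{B}$ for which
\[
\Bigl|\sum_{p_{j}\le x_{k}}p_{j}^{-\I t_{k}}\Bigr|\gg x_{k}^{1/2+\delta}
\]
for some fixed $\delta>0$; the rapid growth of $\zeta^{*}$ near $\Re s=1/2$ contributes a large term to a suitable Perron integral at the resonant frequencies $t_{k}$, which can be extracted as a lower bound on the exponential sum via a weighted-average device analogous to the $I_{t}(x)$ used in the proof of Theorem \ref{th: LH(P) <=> RH}.

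The main obstacle is balancing three opposing constraints: the $x^{\beta}$ error for $N$, the near-optimal $\log x\log\log x$ error for $\psi$, and the engineered failure of $\LH(\MP,\Li(x))$. The tight $\psi$-control forces $-\zeta^{\prime}/\zeta-1/(s-1)$ to be analytic and polynomially bounded in $\Re s>0$, so the failure of $\LH$ cannot come from bad behavior of the logarithmic derivative in the strip $1/2<\Re s<1$; it must instead be produced by the growth of $\zeta$ itself, exploiting the gap between the growth rates of $\zeta$ and of $\zeta^{\prime}/\zeta$. Maintaining this delicate spectral balance through the discretization---so that the resonance is not washed out and yet $N_{\MP}-Ax$ is kept within $O(x^{\beta})$---is the principal technical hurdle.
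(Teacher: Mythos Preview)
Your central claim---that because the tight $\psi$-bound forces $-\zeta'/\zeta-1/(s-1)$ to be polynomially bounded, ``the failure of $\LH$ cannot come from bad behavior of the logarithmic derivative''---is where the argument breaks down. Polynomial boundedness means $-\zeta'/\zeta\ll|\tau|^{C}$ for some fixed $C$, whereas $\LH(\MP,\Li(x))$ forces the much stronger bound $-\zeta'/\zeta\ll_{\eps}|\tau|^{\eps}$ for every $\eps>0$ (this is precisely what the paper derives from \eqref{eq: approx by partial sum}). There is a wide gap between these two conditions, and the paper's construction exploits exactly that gap: one builds a template $\psi_{C}(x)=x-\log x-1+\sum_{k}R_{k}(x)$ with $\sum_{k}R_{k}(x)\ll1$ (so the $\psi$-bound holds), whose associated logarithmic derivative satisfies $-\zeta_{C}'/\zeta_{C}(\sigma+\I\tau_{k})\gg\tau_{k}^{\beta-\sigma-\eps}$ along a sequence $\tau_{k}\to\infty$ for $1/2<\sigma<\beta$. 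This is polynomial growth, hence compatible with the $\psi$-constraint, but it is not $O(|\tau|^{\eps})$, so $\LH(\MP,\Li(x))$ fails. In short, the failure of $\LH$ \emph{does} come from the logarithmic derivative.

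Your proposed substitute mechanism---engineering near-exponential growth of $\zeta$ itself and then extracting a lower bound on $\sum_{p_{j}\le x}p_{j}^{-\I t}$ via a weighted Perron device---has no clear route to success. Exponential sums over primes are governed by $-\zeta'/\zeta$, not by $\zeta$; the $I_{t}(x)$ device in the proof of Theorem~\ref{th: LH(P) <=> RH} works by isolating the residue at a \emph{zero} of $\zeta$, and there is no analogous residue to extract from mere growth. A smaller but related error: analyticity and polynomial boundedness of $-\zeta'/\zeta-1/(s-1)$ in $\Re s>0$ do not, via Mellin inversion, yield $\psi(x)=x+O(\log x\log\log x)$; at best they give $O_{\eps}(x^{\eps})$. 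The paper obtains the very tight $\psi$-bound by arranging $\sum_{k}R_{k}(x)\ll1$ directly at the level of the template, not by inverting a growth bound.
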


\begin{proof}

The idea is to construct a number system $(\MP,\MN)$ satisfying $\psi(x) = x + O(\log x\log\log x)$, and whose zeta function $\zeta(s)$ has the following growth properties.
For every $\eps>0$,
\begin{equation}
\label{eq: upper bound zeta}
	\zeta(\sigma+\I \tau) \ll_{\eps} \abs{\tau}^{\eps}, \quad\text{for } \sigma\ge\beta, \quad \abs{\tau}\ge T_{0}(\eps);
\end{equation}
there exists a sequence of ordinates $(\tau_{k})_{k\ge1}$ increasing to $\infty$ such that, for each fixed $\sigma_{0}$ and $\sigma_{1}$ with $1/2 < \sigma_{0} < \sigma_{1} < \beta$, and for every $\eps>0$,
\begin{equation}
\label{eq: lower bound zeta}
	-\frac{\zeta'(\sigma+\I\tau_{k})}{\zeta(\sigma+\I\tau_{k})} \gg \tau_{k}^{\beta-\sigma-\eps}, \quad \text{for }\sigma_{0} \le \sigma \le \sigma_{1},
\end{equation}
provided that $k\ge K$ for some $K=K(\eps)$. 

The upper bound \eqref{eq: upper bound zeta} implies by Perron inversion that $N(x) = Ax + O_{\eps}(x^{\beta+\eps})$ for some $A>0$ and every $\eps>0$ (so to get the bound $O(x^{\beta})$, we apply the construction with a slightly smaller $\beta'$, $1/2<\beta'<\beta$).

Next we show that the lower bound \eqref{eq: lower bound zeta} is incompatible with $\LH(\MP, \Li(x))$ (see also Theorem \ref{LH <=> zeta bound} below).
For real $t$, write again 
\[
	\psi(x,t) = \sum_{n_{j}\le x}\Lambda(n_{j})n_{j}^{-\I t} = \frac{x^{1-\I t}}{1-\I t} + R(x,t).
\] 
We can approximate $-\zeta'(s)/\zeta(s)$ by the partial sums of its defining Dirichlet series even when $\sigma < 1$. 
Indeed, following the proof idea of Lemma \ref{lem: convexity bounds} and using $R(x, 0) \ll_{\eps} x^{\eps}$ for all $\eps>0$ we obtain for fixed $x\ge1$
\begin{equation}
\label{eq: approx by partial sum}
	-\frac{\zeta'(s)}{\zeta(s)} = 	\sum_{n_{j} \le x}\frac{\Lambda(n_{j})}{n_{j}^{s}}  - \frac{x^{1-s}}{1-s} + O_{\eps}\Bigl(\,\abs{\tau}\frac{x^{\eps-\sigma}}{\sigma-\eps}\Bigr), 
	\quad \text{for } \abs{\tau}\ge1 \text{ and } \sigma > \eps,
\end{equation}
for every $\eps>0$. Suppose now that $\LH(\MP, \Li(x))$ holds. Then we can use \eqref{eq: LH(P) with Lambda} to estimate the first sum: if $1\le x\le \abs{\tau}^{B}$ for some fixed $B$, then 
\begin{align*}
	\sum_{n_{j} \le x}\frac{\Lambda(n_{j})}{n_{j}^{s}} 	
		&= \int_{1}^{x}u^{-\sigma}\dif\psi(u,\tau) = \int_{1}^{x}u^{-\sigma-\I\tau}\dif u + \int_{1}^{x}u^{-\sigma}\dif R(u,\tau) \\
		&= \frac{x^{1-s}-1}{1-s} + O\Bigl(\, \abs{\tau}^{\eps}\frac{x^{1/2-\sigma}-1}{1/2-\sigma}\Bigr).
\end{align*}
Inserting this into \eqref{eq: approx by partial sum} and selecting $B=2$, $x=\abs{\tau}^{2}$, $\sigma>1/2$ yields
\[
	-\frac{\zeta'(s)}{\zeta(s)} \ll_{\eps} \abs{\tau}^{\eps},
\]
for every $\eps>0$. Fixing some $\sigma\in [\sigma_{0},\sigma_{1}]$, this contradicts \eqref{eq: lower bound zeta} upon selecting $\eps$ small enough.

Finally we indicate how to construct a number system with the desired properties. In fact, this construction is a special case of a construction carried out in an upcoming paper by the first author together with G.\ Debruyne and Sz.\ R\'ev\'esz, 
so we limit ourselves to mentioning the main ideas. The starting point is to consider a template Chebyshev ``prime-counting'' function $\psi_{C}$, with associated zeta function $\zeta_{C}(s)$ which has the desired properties. Namely, we consider a rapidly increasing sequence $(\tau_{k})_{k\ge1}$ and set
\[
	\psi_{C}(x) = x - \log x - 1 + \sum_{k=1}^{\infty}R_{k}(x),
\]
where $R_{k}(x) = \int_{1}^{x}\dif R_{k}(u)$ with  
\[
	\dif R_{k}(u) = \begin{dcases}
		\tau_{k}\cos(\tau_{k}\log u)u^{\beta-2}\dif u, 	&\text{if } A_{k}\le u< B_{k}, \\
		0									&\text{else.}
	\end{dcases}
\]
Here, $A_{k}=\tau_{k}^{1+\delta_{k}}$, and $B_{k}=\tau_{k}^{\nu_{k}}$ for certain sequences $\delta_{k}\to 0$ and $\nu_{k}$ close to 2, chosen in such a way that $\tau_{k}\log A_{k}, \tau_{k}\log B_{k}\in 2\pi\Z$. One readily verifies that $\sum_{k}R_{k}(x) \ll 1$, and that 
\[
	-\frac{\zeta_{C}(s)}{\zeta_{C}(s)} \coloneqq \int_{1}^{\infty}x^{-s}\dif \psi_{C}(x) = \frac{1}{s-1}-\frac{1}{s} + \sum_{k=1}^{\infty}\eta_{k}(s),
\]
where 
\[
	\eta_{k}(s) = \frac{\tau_{k}}{2}\bigl(A_{k}^{\beta-1-s}-B_{k}^{\beta-1-s}\bigr)\biggl(\frac{1}{\beta-1-s+\I\tau_{k}} + \frac{1}{\beta-1-s-\I\tau_{k}}\biggr).
\]
The zeta function $\zeta_{C}(s)$ satisfies the desired bounds. 

To finish the construction, one applies a ``discretization procedure'' such as Theorem 1.2 of \cite{BrouckeVindas} (or Theorem \ref{probabilistic theorem} of this paper) to show that there exists a Beurling number system $(\MP, \MN)$ for which $\psi_{\MP}(x)$ and $\zeta_{\MP}(s)$ are suitably approximated by $\psi_{C}(x)$ and $\zeta_{C}(s)$, respectively. 
\end{proof}

We end this section by mentioning that there exist Beurling number systems $(\MP, \MN)$ satisfying RH, $\LH(\MP, \Li(x))$, and $\LH(\MN, x)$. Unconditionally, no examples are known satisfying RH and $N(x) = Ax+O(x^{1/2})$. However, in \cite[Theorem 3.1]{BrouckeVindas} the existence of systems satisfying
\[
	\psi(x) = x + O(x^{1/2}), \quad N(x) = x + O\bigl(x^{1/2}\exp(c(\log x)^{2/3})\bigr), \quad \text{for some } c > 0
\]
was shown. This is done by applying the probabilistic procedure \cite[Theorem 1.2]{BrouckeVindas} or Theorem \ref{probabilistic theorem} of this paper with $M(x) = \Li(x)$ to generate a sequence of Beurling primes $\MP$ satisfying \eqref{eq: probabilistic bound}. By construction, RH and $\LH(\MP, \Li(x))$ are satisfied. If $\MN$ is the sequence of Beurling integers generated by $\MP$ with counting function $N(x)$, we readily have $N(x)\sim Ax$ for some $A>0$. In \cite{BrouckeVindas} it was shown that the bound \eqref{eq: probabilistic bound} implies that the associated Beurling zeta function $\zeta(s)$ has analytic continuation to $\Re s>1/2$, apart from a simple pole at $s=1$, and satisfies the following bound:
\begin{equation}
\label{eq: zeta bound}
	\zeta(\sigma+\I \tau) \ll \exp\biggl\{C\biggl(\frac{1}{\sigma-1/2}+\sqrt{\frac{\log\abs{\tau}}{\sigma-1/2}}\biggr)\biggr\}, \quad \text{for some } C>0 \text{ and if } \sigma >\frac12, \quad \abs{\tau}>1.
\end{equation}
By changing finitely many primes, one can assume that the residue of $\zeta(s)$ at $s=1$ equals $1$, so that $N(x)\sim x$. The error term $O\bigl(x^{1/2}\exp(c(\log x)^{2/3})\bigr)$ then follows from the bound \eqref{eq: zeta bound} and Perron inversion. (See the proof of \cite[Theorem 3.1]{BrouckeVindas} for more details.) 

Actually $\LH(\MN, x)$ can be proven in a similar way. 
Let $\eps>0$ and $B\ge1$ be arbitrary. From \eqref{eq: zeta bound} it follows that there is a $\lambda = \lambda(\eps) >0$ such that 
\[
	\zeta\Bigl(\frac{1}{2} + \frac{\lambda}{\log T} + \I \tau\Bigr) \ll T^{\eps}, \quad \text{for } \abs{\tau}\le 2T.
\]
Let $x$ and $t$ be given with $1\le x\le \abs{t}^{B}$. Applying the Perron formula \eqref{effective Perron} with $\kappa=1+1/\log x$, $T=\abs{t}^{B}$, and shifting the contour of integration to the line $\Re s = 1/2 + \lambda/\log T$, one sees after a small calculation that
\[
	\sum_{n_{j}\le x }n_{j}^{-\I t} = \frac{x^{1-\I t}}{1-\I t} + O_{\eps}\bigl(x^{1/2}\abs{t}^{2\eps B}+x^{1/2}\exp(c(\log x)^{2/3})\bigr).
\]
This establishes $\LH(\MN, x)$.

\section{Remarks on the definition of $\LH(\MN, M(x))$}
\label{sec: remarks definition}
As touched upon in the Introduction, the definition of $\LH$ is not intrinsic to the considered sequence, but also depends on the function $M(x)$. For example, if $\MN$ is a non-decreasing sequence with $N(x) = x + O_{A}(x/\log^{A} x)$ for every $A>0$, then it is admissible both with $M_{1}(x) = x$ and $M_{2}(x) = x + x^{3/4}$. However, $\LH(\MN, M_{1}(x))$ and $\LH(\MN, M_{2}(x))$ are mutually exclusive. Moreover, if $\MN$ does not have too large clusters, one can always artificially construct $M(x)$ such that $\LH(\MN, M(x))$ holds.

\begin{proposition}
\label{prop: choice M}
Let $\MN =(n_{j})_{j\ge1}$ be a non-decreasing sequence with $N(x) \asymp x$, and suppose that $N(x+\e^{-cx}) - N(x-\e^{cx}) \ll \sqrt{x}$ for some $c>0$. Then there is a non-decreasing smooth function $M$ such that $N(x) = M(x) + O(\sqrt{x})$ and such that for every $\eps>0$ we have that for $x$, $\abs{t}\ge 1$,
\[
	\sum_{n_{j}\le x}n_{j}^{-\I t} = \int_{n_{1}}^{x}u^{-\I t}M'(u)\dif u + O_{\eps}(x^{1/2}\abs{t}^{\eps}).
\]
\end{proposition}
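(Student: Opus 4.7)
The plan is to define $M$ as a smoothing of the step function $N$ at an exponentially fine scale, precisely matched to the clustering hypothesis. Fix a $C^{\infty}$ non-decreasing function $H\colon \R \to [0,1]$ with $H(u) = 0$ for $u\le -1$, $H(u) = 1$ for $u \ge 1$, and $H(u) + H(-u) = 1$ (so that $H'$ is even with $\int H' = 1$). Set $\delta_{j} = \e^{-c n_{j}/2}$ (with the constant $c$ from the hypothesis), and define
\[
	M(x) = \sum_{j\ge1} H\Bigl(\frac{x-n_{j}}{\delta_{j}}\Bigr).
\]
For each fixed $x$ only finitely many summands lie in $(0,1)$ (precisely those with $|x-n_{j}| \le \delta_{j}$), so $M$ is a well-defined smooth non-decreasing function, and the estimate $M(x) - N(x) = O(\sqrt{x})$ is immediate: the difference equals a bounded quantity summed over $j$ with $|x-n_{j}|\le \delta_{j}$, and for such $j$ in $[x/2, 2x]$ we have $\delta_{j} \le \e^{-cx/4}$, so the clustering hypothesis gives $O(\sqrt{x})$ such indices.

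For the exponential-sum estimate, decompose the difference as
\[
	D(x,t) \coloneqq \int_{n_{1}}^{x} u^{-\I t}M'(u)\dif u - \sum_{n_{j}\le x}n_{j}^{-\I t} = \sum_{j}E_{j}(x,t),
\]
where $E_{j}(x,t) = \int_{n_{1}}^{x}u^{-\I t}\delta_{j}^{-1}H'((u-n_{j})/\delta_{j})\dif u - \mathbf{1}_{n_{j}\le x}\,n_{j}^{-\I t}$. Classify $j$ as \emph{boundary} if the bump support $(n_{j}-\delta_{j},n_{j}+\delta_{j})$ crosses $n_{1}$ or $x$, and \emph{interior} otherwise. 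Boundary $j$ are confined to a $\delta_{j}$-neighbourhood of the endpoints, so by the clustering hypothesis there are $O(\sqrt{x})$ of them, each with $E_{j} = O(1)$.

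For an interior $j$ with $n_{j}\le x$, the substitution $u = n_{j}+\delta_{j}v$ and the expansion $(1+\delta_{j}v/n_{j})^{-\I t} = \e^{-\I t\delta_{j}v/n_{j}}\bigl(1 + O(|t|\delta_{j}^{2}/n_{j}^{2})\bigr)$ give
\[
	E_{j} = n_{j}^{-\I t}\bigl(\widehat{H'}(t\delta_{j}/n_{j}) - 1\bigr) + O(|t|\delta_{j}^{2}/n_{j}^{2}),
\]
where $\widehat{H'}(\xi) = \int \e^{-\I\xi v}H'(v)\dif v$. Since $H'$ is smooth and even, $\widehat{H'}(\xi) = 1 + O(\xi^{2})$ near $0$ and $\widehat{H'}(\xi) = O(1)$ globally, so $|E_{j}| \ll \min\bigl(1, (|t|\delta_{j}/n_{j})^{2}\bigr)$ (the secondary error is absorbed for $|t|\ge 1$). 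Split the interior sum at $n_{j}^{*} \asymp \log|t|$, the scale where $|t|\delta_{j}/n_{j}$ crosses $1$: the $O(\log|t|)$ terms with $n_{j} \le n_{j}^{*}$ contribute $O(\log|t|)$ in total (using $N(x)\asymp x$), and for $n_{j} > n_{j}^{*}$ the terms $t^{2}\e^{-cn_{j}}/n_{j}^{2}$ form a rapidly convergent geometric sum of size $O(1/\log^{2}|t|)$. Therefore $D(x,t) \ll \sqrt{x} + \log|t| \ll_{\eps} x^{1/2}|t|^{\eps}$ for all $x,|t|\ge 1$, as required.

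The main obstacle is achieving the bound uniformly in $x$ and $|t|$ without the convenient restriction $x \le |t|^{B}$. A naive partial summation using only $|M-N| \ll \sqrt{x}$ yields an integral term of order $|t|\sqrt{x}$, which is far too large; the crucial feature of the exponentially fine smoothing scale (exactly the scale the clustering hypothesis permits) is that the resulting Fourier-type decay collapses this worst-case $|t|$-dependence down to $\log|t|$.
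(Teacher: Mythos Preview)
Your construction is the right one (and the same as the paper's: place a smooth bump of exponentially small width at each $n_{j}$), but there is a parameter error that invalidates two of your steps as written. With $\delta_{j}=\e^{-cn_{j}/2}$, the indices $j$ with $|x-n_{j}|\le\delta_{j}$ lie in an interval of width $\asymp \e^{-cx/2}$ about $x$; the clustering hypothesis, however, only controls intervals of width $\e^{-cx}$, and covering your interval requires $\e^{cx/2}$ of those, so the claimed bounds $M(x)-N(x)=O(\sqrt{x})$ and ``$O(\sqrt{x})$ boundary terms'' do not follow. The remedy is simply to take $\delta_{j}\le \e^{-cn_{j}}$ (the paper uses width $\e^{-Cn_{j}}$ with $C\ge\max\{1,2c\}$); then for $|x-n_{j}|\le\delta_{j}$ one has $n_{j}$ within $O(1)$ of $x$, hence $\delta_{j}\ll \e^{-cx}$, and the clustering hypothesis applies directly. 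With this change the Fourier estimate and the split at $n_{j}^{\ast}\asymp\log|t|$ go through (your tail bound is actually $O(1/\log|t|)$, not $O(1/\log^{2}|t|)$, by partial summation against $N(u)\ll u$, but this is harmless).

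Once the scale is corrected, your Fourier-analytic treatment of the interior terms is a valid alternative to the paper's argument, which is shorter: the paper simply observes that if $|t|\ge \e^{\sqrt{x}}$ then $|t|^{\eps}\gg_{\eps} x$ and both sides are trivially $O_{\eps}(x^{1/2}|t|^{\eps})$, while if $|t|<\e^{\sqrt{x}}$ then for every $n_{j}>\sqrt{x}$ one has $|u^{-\I t}-n_{j}^{-\I t}|\ll |t|/(n_{j}\e^{Cn_{j}})\le 1/\sqrt{x}$ uniformly on the bump, so each bump integral equals $n_{j}^{-\I t}+O(1/\sqrt{x})$ and summing over $O(x)$ terms gives $O(\sqrt{x})$. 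Your approach yields the slightly sharper intermediate estimate $D(x,t)\ll\sqrt{x}+\log|t|$ at the cost of a bit more bookkeeping.
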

\begin{proof}
The idea is to define $M$ such that $M'$ approximates $\sum_{j}\delta_{n_{j}}(x)$, where $\delta_{y}(x)$ denotes the Dirac delta supported at $y$. Let $\vphi$ be  a smooth non-negative function with $\supp \vphi \subseteq [-1,1]$ and $\int\vphi=1$. For $\lambda>0$ we set $\vphi_{\lambda}(x) = \lambda\vphi(\lambda x)$ and
\begin{align*}
	\psi(x) 	&= \sum_{j\ge1}\vphi_{\lambda_{j}}(x-n_{j}), \quad \lambda_{j} = \e^{Cn_{j}}; \\
	M(x)		&=\int_{-\infty}^{x}\psi(u)\dif u;
\end{align*}
where $C\ge\max\{1,2c\}$. By the ``clustering condition'' $N(x+\e^{-cx}) - N(x-\e^{-cx}) \ll \sqrt{x}$, one sees that $N(x)=M(x)+O(\sqrt{x})$. Let now $\eps>0$ and $x, t \ge 1$. If $t \ge \e^{\sqrt{x}}$, then $t^{\eps} \ge \e^{\eps\sqrt{x}} \gg_{\eps} x$, so that the exponential sum $\sum_{n_{j}\le x}n_{j}^{-\I t}$  and the integral $\int_{n_{1}}^{x}u^{-\I t}M'(u)\dif u$ are both trivially $O_{\eps}(x^{1/2}t^{\eps})$.  Suppose now that $t < \e^{\sqrt{x}}$. Then as $N(x) \ll x$ and by the clustering condition we have
\[
	\int_{n_{1}}^{x}u^{-\I t}M'(u) \dif u = \sum_{\sqrt{x}<n_{j}\le x}\int_{n_{j}-1/\lambda_{j}}^{n_{j}+1/\lambda_{j}}u^{-\I t}\vphi_{\lambda_{j}}(u-n_{j})\dif u+ O(\sqrt{x}).
\]
Now for $\abs{u-n_{j}}\le 1/\lambda_{j}$ with $n_{j}\ge\sqrt{x}$, we get
\[
	\abs[1]{u^{-\I t}-n_{j}^{-\I t}} \ll \frac{t}{n_{j}\lambda_{j}} \ll \frac{\e^{\sqrt{x}}}{\sqrt{x}\e^{C\sqrt{x}}} \le \frac{1}{\sqrt{x}}.
\]
We conclude that
\[
	\int_{n_{1}}^{x}u^{-\I t}M'(u) \dif u = \sum_{\sqrt{x}<n_{j}\le x}n_{j}^{-\I t} + O(\sqrt{x}) = \sum_{n_{j}\le x}n_{j}^{-\I t} + O(\sqrt{x}).
\]
\end{proof}

If one would like to modify the definition of $\LH$ to avoid such pathologies, an option is to only allow functions $M(x)$ from a certain class $\mathcal{M}$ of ``sufficiently nice'' functions. Ideally one would also have that $\LH(\MN, M_{1}(x)) \iff \LH(\MN, M_{2}(x))$ for every $M_{1}, M_{2}\in \mathcal{M}$. We were however unable to come up with a satisfactory and unartificial class $\mathcal{M}$.
\medskip

Another option is to propose an alternative definition of $\LH$ which does not refer to a function $M(x)$ at all. For example, if $\MN$ is a sequence satisfying $\LH(\MN, x)$, its exponential sums exhibit square-root cancellation if $\abs{t}\ge x^{1/2}$. So one might define for sequences $\MN$ with counting function $N(x)$:
\[
	\widetilde{\LH}(\MN) \iff \forall \eps>0: x\ge n_{1} \text{ and } \abs{t}\ge N(x)^{1/2} \implies \sum_{n_{j}\le x}n_{j}^{-\I t} \ll_{\eps} N(x)^{1/2}\abs{t}^{\eps}.
\] 

This definition is intrinsic to the sequence. Furthermore, if $M(x)$ is twice differentiable with $\int_{n_{1}}^{x}u|M''(u)|\dif u \ll M(x)$ (in particular $M'(x)\ll M(x)$), then $\LH(\MN, M(x)) \implies \widetilde{\LH}(\MN)$, as can be seen from integrating by parts in $\int_{n_{1}}^{x}u^{-\I t}M'(u)\dif u$.

The results from Section \ref{sec: LH general sequences} hold also for this alternative notion of LH: the example $\MN$ from Theorem \ref{counterexample conjecture 1} does not satisfy $\widetilde{\LH}(\MN)$, in the considered probability spaces $\widetilde{\LH}(\MN)$ holds with probability $1$ (provided that $M$ is sufficiently nice), and in the considered topological spaces $\widetilde{\LH}(\MN)$ only holds on a meagre set (take e.g.\ $F=0$ in Theorem \ref{categoric theorem}).

However, the equivalence $\widetilde{\LH}(\MP) \iff \mathrm{RH}$ does not seem to hold: if $(\MP, \MN)$ is a Beurling number system satisfying\footnote{It is unclear if such systems exist, but one can apply Theorem \ref{probabilistic theorem} with $M(x) = \Li(x)-\Li(x^{3/4})$ to show the existence of a system $(\MP, \MN)$ satisfying $\LH(\MP, \Li(x)-\Li(x^{3/4}))$ and hence also $\widetilde{\LH}(\MP)$, $N(x) = x + O\bigl(x^{1/2}\exp(c(\log x)^{2/3})\bigr)$ for some $c>0$, while RH and $\LH(\MP, \Li(x))$ fail.} 
$N(x) = Ax+O(\sqrt{x})$ and say $\LH(\MP, \Li(x)-\Li(x^{3/4}))$, then $\widetilde{\LH}(\MP)$ holds but not RH, as this would imply $\LH(\MP, \Li(x))$ by Theorem \ref{th: LH(P) <=> RH}.
\section{Appendix}
\label{sec: Appendix}

Classically, the Lindel\"of hypothesis is formulated as a growth bound for the Riemann zeta function on the line $\Re s=1/2$. Here we discuss the analog for general sequences, which are sufficiently well-behaved so that their Dirichlet series admits analytic continuation to $\Re s \ge 1/2$ (apart from the pole at $s=1$).

Let $\MN = (n_j)_{j\ge1}$ be a non-decreasing sequence with counting function $N(x) = Ax + O(x^{\theta})$, where $0 \leq \theta < 1$. We denote\footnote{Even though the sequence $(n_{j})_{j\ge1}$ is not necessarily the integer sequence generated by a sequence of Beurling primes.} its Dirichlet series as $\zeta(s) = \sum_{j\ge1}n_{j}^{-s}$. Then $\zeta(s) - A/(s-1)$ has analytic continuation to $\Re s >\theta$. We need the following approximation of $\zeta(s)$ in the critical strip, which can be easily verified by adapting the proof of Lemma \ref{lem: convexity bounds}:
\begin{equation}\label{eq: approximation zeta}
	\zeta(s) = \sum_{n_j \leq x} n_j^{-s} - A \frac{x^{1-s}}{1-s} + O\Big(\Big(1+\frac{\abs{s}}{\sigma - \theta}\Big) x^{\theta-\sigma}\Big), \quad \text{for } x \geq 1, \quad \sigma = \Re(s) > \theta.
\end{equation}

\begin{theorem}
\label{LH <=> zeta bound}
	Assume $N(x) = Ax + O(x^\theta)$, where $0 \leq \theta < 1/2$. Then $\LH(\MN, Ax)$ is equivalent to
	\begin{equation}\label{eq: generalization classical LH}
		\zeta(1/2 + \I \tau) \ll_\eps |\tau|^\eps \quad \text{for all } \eps > 0.
	\end{equation}
\end{theorem}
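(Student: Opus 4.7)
The plan is to prove both implications using the approximation formula \eqref{eq: approximation zeta}, which replaces $\zeta(s)$ by a partial sum plus a controlled error valid throughout the half-plane $\Re s > \theta$, combined with partial summation (for one direction) and effective Perron inversion (for the other).

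For the implication $\LH(\MN, Ax) \Rightarrow \zeta(1/2+\I\tau) \ll_\eps |\tau|^\eps$, I would apply \eqref{eq: approximation zeta} at $s = 1/2 + \I\tau$ to write
\[
    \zeta(1/2+\I\tau) = \sum_{n_j\le x}n_j^{-1/2-\I\tau} - \frac{A x^{1/2-\I\tau}}{1/2-\I\tau} + O\!\left(\frac{|\tau|\,x^{\theta-1/2}}{1/2-\theta}\right).
\]
To exploit $\LH$, I would apply Abel summation to express the partial sum as
\[
    \sum_{n_j\le x}n_j^{-1/2-\I\tau} = x^{-1/2}S(x,\tau) + \tfrac{1}{2}\int_{n_1}^{x}u^{-3/2}S(u,\tau)\,\dif u,
\]
where $S(u,\tau) = \sum_{n_j\le u}n_j^{-\I\tau}$. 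Plugging in $S(u,\tau) = Au^{1-\I\tau}/(1-\I\tau) + O(u^{1/2}|\tau|^\eps)$ from $\LH$, a short calculation shows that the resulting main terms telescope exactly to $Ax^{1/2-\I\tau}/(1/2-\I\tau)$, cancelling against the explicit main term in the approximation, while the error contributes $O(|\tau|^\eps\log x)$. Hence
\[
    \zeta(1/2+\I\tau) \ll |\tau|^\eps\log x + |\tau|\,x^{\theta-1/2}.
\]
Choosing $x = |\tau|^c$ with $c = 2/(1-2\theta)$ balances the two error terms (and requires $B \ge c$ in the $\LH$ application), giving $\zeta(1/2+\I\tau) \ll_\eps |\tau|^{\eps'}$ for any $\eps' > 0$.

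For the converse implication, I would run the standard contour shift argument via effective Perron inversion \eqref{effective Perron} applied to the Dirichlet series $\zeta(s+\I t) = \sum_j n_j^{-\I t}\,n_j^{-s}$ with $\kappa = 1 + 1/\log x$ and $T = |t|^B$ (choosing $x$ off the sequence as in the proof of Theorem \ref{th: LH(P) <=> RH}). Shifting the contour from $\Re s = \kappa$ to $\Re s = 1/2 + 1/\log x$ picks up the residue $A\,x^{1-\I t}/(1-\I t)$ from the simple pole of $\zeta(s+\I t)$ at $s = 1-\I t$, which matches $\int_{n_1}^x A u^{-\I t}\dif u$ up to $O(1)$. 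The remaining task is to bound the vertical integral on $\Re s = 1/2 + 1/\log x$ and the horizontal pieces at height $\pm T$. For these one needs bounds on $\zeta(\sigma+\I\nu)$ throughout the strip $1/2 \le \sigma \le \kappa$; these follow from the hypothesis $\zeta(1/2+\I\nu) \ll_\eps |\nu|^\eps$, the trivial bound $\zeta(\kappa+\I\nu) = O(\log|\nu|)$ on the absolute convergence side, and a Phragm\'en--Lindel\"of interpolation (alternatively, one can apply \eqref{eq: approximation zeta} directly together with the $|\nu|^\eps$ bound to get the same conclusion). The vertical integral then contributes $O(x^{1/2}|t|^{\eps})$ (with the log factor absorbed into $|t|^\eps$), and the horizontal pieces contribute a negligible amount thanks to the choice $T = |t|^B$.

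The main technical obstacle is the partial-summation step in the first direction: one must verify that the coefficient of $x^{1/2-\I\tau}$ from the boundary term $x^{-1/2}\cdot Ax^{1-\I\tau}/(1-\I\tau)$ combined with the integrated main term $\tfrac{1}{2}\int u^{-3/2}\cdot Au^{1-\I\tau}/(1-\I\tau)\dif u$ sums to precisely $A/(1/2-\I\tau)$, matching the explicit term in \eqref{eq: approximation zeta}. The identity is
\[
    \frac{A}{1-\I\tau}\Bigl(1 + \frac{1}{2(1/2-\I\tau)}\Bigr) = \frac{A}{1/2-\I\tau},
\]
which is immediate; once this algebraic point is confirmed, everything else is routine bookkeeping of error terms and a choice of $B$ depending on $\theta$ and $\eps$.
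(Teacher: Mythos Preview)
Your proposal is correct and follows essentially the same approach as the paper: both directions use the approximation formula \eqref{eq: approximation zeta} together with partial summation (for $\LH \Rightarrow$ the $\zeta$-bound) and effective Perron inversion with a contour shift and Phragm\'en--Lindel\"of (for the converse). The only cosmetic differences are that the paper writes the partial summation as the Stieltjes integral $\int_1^x u^{-1/2}\,\dif S(u)$ and absorbs your $\log x$ factor directly into $|\tau|^\eps$, and that your choice $T = |t|^B$ in the Perron step should be replaced by a somewhat larger power of $|t|$ (the paper uses $T = |t|^{2B}$ in the analogous argument) to kill the Perron error; this is a trivial adjustment.
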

\begin{proof}[Sketch of proof.]

Suppose first that (\ref{eq: generalization classical LH}) holds. Then the Phragm\'en--Lindelöf principle implies that
\begin{equation}\label{eq: bound zeta classical LH}
	\zeta(s) - \frac{A}{s-1} \ll_\eps |\tau|^\eps \quad \text{in the vertical strip } \frac{1}{2} \leq \Re(s) \leq 1.
\end{equation}
Now we apply the effective Perron inversion formula \eqref{effective Perron} to the sequence $(a_{j})_{j\ge1} = (n_j^{-\I t})_{j \geq 1}$ and proceed as in the proof of Theorem \ref{th: LH(P) <=> RH}, using (\ref{eq: bound zeta classical LH}) to bound the integrals after shifting the line of integration to the left. This proves $\LH(\MN, Ax)$.
	
Now suppose that $\LH(\MN, Ax)$ holds. Then we obtain from (\ref{eq: approximation zeta}) with $x = |\tau|^B$, where $B = (1/2 - \theta)^{-1}$,
\begin{equation}\label{eq: approximation zeta 1/2}
	\zeta(1/2+\I \tau) = \sum_{n_j \leq x} n_j^{-1/2-\I \tau} - A \frac{x^{1/2-\I \tau}}{1/2 - \I \tau} + O(1).
\end{equation}
By assumption we have 
\[
	S(u) = \sum_{n_j \leq u} n_j^{-\I \tau} = A \frac{u^{1-\I \tau}}{1 - \I \tau} + O_\eps(u^{1/2} |\tau|^\eps)
\]
for $1 \leq u \leq |\tau|^B = x$. Integrating by parts yields	
\[
	\sum_{n_j \leq x} n_j^{-1/2-\I \tau} = \int_{1}^x u^{-1/2} \dif S(u) = A \frac{x^{1/2-\I \tau}}{1/2 - \I \tau} + O_\eps(|\tau|^\eps).
\]
Substituting this in (\ref{eq: approximation zeta 1/2}) gives (\ref{eq: generalization classical LH}) and the proof is finished.
\end{proof}
Note that the proof shows in particular that
\begin{equation}\label{eq: LH for fixed B}
	\sum_{n_{j}\le x}n_{j}^{-\I t} = A\frac{x^{1-\I t}}{1-\I t} + O_{\eps}(x^{1/2}\abs{t}^{\eps}) \quad \text{for } 1 \leq x \leq |t|^B
\end{equation}
with fixed $B = (1/2-\theta)^{-1}$ implies (\ref{eq: generalization classical LH}), which in turn implies (\ref{eq: LH for fixed B}) for all $B>0$. The fact that \eqref{eq: LH for fixed B} with $B = (1/2-\theta)^{-1}$ implies \eqref{eq: LH for fixed B} for any fixed $B>0$ also follows directly by estimating 
\[
	\sum_{\abs{t}^{\frac{1}{1/2-\theta}} < n_{j}\le x}n_{j}^{-\I t}
\]
by integrating by parts.



\begin{thebibliography}{99}

\bibitem{Banks} W.~D.~Banks, \emph{The Riemann and Lindel\"of hypothesis are determined by thin sets of primes}, Proc. Amer. Math. Soc. \textbf{150} (2022), no. 10, 4213--4222.

\bibitem{Beurling} A. Beurling, Analyse de la loi asymptotique de la distribution des nombres premiers g\'{e}n\'{e}ralis\'{e}s, I, {\em Acta Math.} {\bf 68} (1937), 255-291.




\bibitem{BrouckeVindas} F.~Broucke, J.~Vindas, \emph{A new generalized prime random approximation procedure and some of its applications}, preprint available on arXiv: 2102.08478.

\bibitem{DiamondMontgomeryVorhauer} H.~G.~Diamond, H.~L.~Montgomery, U.~M.~A.~Vorhauer, \emph{Beurling primes with large oscillation}, Math. Ann. \textbf{334} (2006) 1--36.

\bibitem{DiamondZhangbook} H.~G.~Diamond, W.-B.~Zhang, \emph{Beurling generalized numbers,} Mathematical Surveys and Monographs series,  American Mathematical Society, Providence, RI, 2016. 

\bibitem{GGL} S.~M.~Gonek, S.~W.~Graham, Y.~Lee, \emph{The Lindel\"of hypothesis for primes is equivalent to the Riemann hypothesis}, Proc. Amer. Math. Soc. \textbf{148} (2020), no. 7, 2863--2875.

\bibitem{Hoeffding} W.~Hoeffding, \emph{Probability inequalities for sums of bounded random variables}, J. Amer. Statist. Assoc. \textbf{58} (1963), 13--30.



\bibitem{Pintz1980} J.~Pintz, \emph{On the remainder term of the prime number formula I. On a problem of Littlewood}, Acta Arith. \textbf{36} (1980), 341--365.

\bibitem{Pintz1982} J.~Pintz, \emph{Oscillatory properties of $M(x)=\sum_{n\le x}\mu(n)$, I}, Acta Arith. \textbf{42} (1982), 49--55.

\bibitem{Oxtoby} J.~C.~Oxtoby, \emph{Measure and Category}, Second edition, Graduate Texts in Mathematics, 2, Springer-Verlag New York, NY, 1980.

\bibitem{Revesz2021} Sz.~Gy.~R\'{e}v\'{e}sz, \emph{A Riemann--von Mangoldt-type formula for the distribution of Beurling primes}, Math.\ Pann.\ New Series \textbf{27} no.\ 2 (2021), 204--232.

\bibitem{Tenenbaum} G.~Tenenbaum, \emph{Introduction to analytic and probabilistic number theory}, Third edition, Graduate Studies in Mathematics, 163, American Mathematical Society, Providence, RI, 2015.

\bibitem{Zhang} W.-B.~Zhang, \emph{Beurling primes with RH and Beurling primes with large oscillation}, Math. Ann. \textbf{337} (2007), 671--704.
\end{thebibliography}
\end{document}